\newtheorem{theorem}{Theorem}[section]
\newtheorem*{theoremp}{Theorem}
\newtheorem{lemma}[theorem]{Lemma}
\newtheorem{claim}[theorem]{Claim}
\newtheorem{question}[theorem]{Open question}
\newtheorem{definition}{Definition}
\def\rr{\mathds{R}}
\DeclareMathOperator{\conv}{conv}
\title{Balanced convex partitions of lines in the plane}
\author[Xue]{Alexander Xue}
\address{Cornell University, Ithaca, NY 14850, United States} 
\email{ajx3@cornell.edu}
\author[Sober\'on]{Pablo Sober\'on}\address{Baruch College, City University of New York, One Bernard Baruch Way, New York, NY 10010, United States} 
\email{pablo.soberon-bravo@baruch.cuny.edu}
\thanks{
This research project was done as
part of the 2019 CUNY Combinatorics REU, supported by NSF awards
DMS-1802059 and DMS-1851420.}
\begin{document}

\maketitle

\begin{abstract}
We prove an extension of a ham sandwich theorem for families of lines in the plane by Dujmovi\'{c} and Langerman.  Given two sets $A, B$ of $n$ lines each in the plane, we prove that it is possible to partition the plane into $r$ convex regions such that the following holds.  For each region $C$ of the partition there is a subset of $c_r n^{1/r}$ lines of $A$ whose pairwise intersections are in $C$, and the same holds for $B$.  In this statement $c_r$ only depends on $r$.  We also prove that the dependence on $n$ is optimal. 
\end{abstract}


\section{Introduction}

A general measure partition problem deals with the way we can split points or measures in Euclidean spaces.  Given a set of rules to split the ambient space, we are interested to know if we can divide a given set of points in a prescribed way.  The quintessential result of this kind is the classic ham sandwich theorem.

\begin{theoremp}
Given $d$ finite sets of points in $\rr^d$ in general position such that each set is of even cardinality there exists a hyperplane that simultaneously splits each set exactly by half.	
\end{theoremp}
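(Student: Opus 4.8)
The plan is to prove the statement through the Borsuk--Ulam theorem, following the classical route: first establish a continuous version for absolutely continuous measures, and then recover the finitary statement for point sets by an approximation argument. So I would begin by phrasing the ``measure ham sandwich'': for any $d$ finite absolutely continuous measures $\mu_1,\dots,\mu_d$ on $\rr^d$ there is a hyperplane $h$ with $\mu_i(h^+)=\mu_i(h^-)$ for every $i$, where $h^\pm$ are the two closed half-spaces bounded by $h$.

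\emph{Step 1: parametrizing hyperplanes.} I would represent an oriented affine hyperplane in $\rr^d$ as the zero set of $x\mapsto\langle a,x\rangle-b$ with $(a,b)\in\rr^{d+1}$; after normalizing one may take $(a,b)\in S^d$, where the two poles $(0,\dots,0,\pm1)$ correspond to the two ``degenerate'' hyperplanes at infinity, for which one closed half-space is all of $\rr^d$ and the other is empty. Writing $H^+_{(a,b)}=\{x:\langle a,x\rangle\ge b\}$ and $H^-_{(a,b)}=\{x:\langle a,x\rangle<b\}$ gives a continuous, antipodally symmetric family of half-space pairs indexed by $S^d$, with $H^\pm_{-v}=H^\mp_v$ up to a set of measure zero.

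\emph{Step 2: the test map and Borsuk--Ulam.} Given $\mu_1,\dots,\mu_d$ as above, define $f\colon S^d\to\rr^d$ by $f_i(v)=\mu_i(H^+_v)-\mu_i(H^-_v)$. Since each $\mu_i$ is absolutely continuous, hyperplanes carry no mass, so $f$ is continuous, and Step 1 shows $f(-v)=-f(v)$. By the Borsuk--Ulam theorem there is $v^\ast\in S^d$ with $f(v^\ast)=0$, i.e.\ a hyperplane bisecting every $\mu_i$; evaluating $f_i$ at a pole gives $\pm\mu_i(\rr^d)$, so $v^\ast$ is not a pole whenever some $\mu_i(\rr^d)>0$, hence it is a genuine hyperplane.

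\emph{Step 3: from measures to points.} For the point sets $P_1,\dots,P_d$, replace each point $p$ by the uniform probability measure on a ball $B(p,\varepsilon)$ and let $\mu^\varepsilon_i$ be the sum of these over $P_i$. Step 2 produces for each $\varepsilon>0$ a hyperplane $H_{v_\varepsilon}$ bisecting $\mu^\varepsilon_1,\dots,\mu^\varepsilon_d$; since $S^d$ is compact, pass to a subsequence with $v_\varepsilon\to v^\ast$. A point of $P_i$ lying strictly on one side of $H_{v^\ast}$ lies, together with a small ball around it, strictly on the same side of $H_{v_\varepsilon}$ once $\varepsilon$ is small, so each open side of $H_{v^\ast}$ contains at most $|P_i|/2$ points of $P_i$. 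Because the $P_i$ are in general position, $H_{v^\ast}$ meets at most $d$ points of $\bigcup_i P_i$, and a standard perturbation of $H_{v^\ast}$ — here the evenness of each $|P_i|$ is used — yields a hyperplane avoiding all the points and splitting each $P_i$ into two equal halves. I expect Step 3 to be the main obstacle: the Borsuk--Ulam input is immediate once the test map is set up, but controlling the behaviour of the limiting hyperplane at points that lie on it, and carrying out the perturbation that removes them without destroying the exact bisection, is exactly where the hypotheses of general position and even cardinality must be spent.
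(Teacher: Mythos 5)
The paper never proves this statement: it is the classical ham sandwich theorem, quoted in the introduction purely as motivating background (with \cite{Zivaljevic:2004vi} cited for the general philosophy of such proofs), so there is no in-paper argument to compare yours against. Your proposal is the standard Borsuk--Ulam proof and is essentially correct: the parametrization of oriented hyperplanes by $S^d$ with the two poles as degenerate half-space pairs, the odd test map $f(v)=\bigl(\mu_i(H_v^+)-\mu_i(H_v^-)\bigr)_{i=1}^d$, and the observation that $f$ cannot vanish at a pole are all as in the textbook treatment. You are also right that Step 3 is where the hypotheses are spent; to close it cleanly, note that the limit hyperplane $H_{v^\ast}$ leaves at most $|P_i|/2$ points of $P_i$ in each open side, that general position puts at most $d$ affinely independent points of $\bigcup_i P_i$ on $H_{v^\ast}$, and that affine independence makes the signed distances of these points linearly independent functions of the hyperplane parameters near $v^\ast$, so any prescribed assignment of the boundary points to the two sides is realized by a small perturbation; evenness of $|P_i|$ guarantees an assignment giving an exact bisection exists. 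It is worth noting that this topological route is quite different in flavor from what the paper actually does for its own results (Theorems \ref{theorem-main} and \ref{theorem-upper-bound}), which deliberately avoid Borsuk--Ulam in favor of elementary combinatorial tools (Erd\H{o}s--Szekeres, Dilworth) plus the KKM theorem; the authors even remark in Section \ref{section-remarks} that finding a proof via stronger topological machinery is open in their setting.
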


The proof of a mass partition result usually boils down to understanding topological properties of the space of partitions \cite{Zivaljevic:2004vi}.  The methods developed to tackle measure partitions problems have broad applications in combinatorial topology.

In this manuscript we are interested in extensions of the ham sandwich theorem for convex partitions of the plane.  A convex partition of $\rr^2$ into $r$ parts is a family of closed sets $C_1, \ldots, C_r \subset \rr^2$ such that
\begin{itemize}
	\item the sets cover $\rr^2$, so $\cup_{i=1}^r C_i = \rr^2$,
	\item the interiors of the sets are pairwise disjoint, and
	\item each $C_i$ is a closed convex set.
\end{itemize}

The ham sandwich theorem has been generalized to convex partitions of the plane.  The following theorem was proven independently by Ito, Uehara, and Yokoyama \cite{Ito:2000eb}, by Bespamyatnikh, Kirkpatrick, and Snoeyink \cite{Bespamyatnikh:2000tn}, and by Sakai \cite{Sakai:2002vs}.

\begin{theorem}\label{theorem-equipartition-of-points}
Let $A_1, A_2$ be two finite sets of points in $\rr^2$, in general position.  If the cardinality of each set is a multiple of $r$, there exists a partition of the plane into $r$ convex sets $C_1, \ldots, C_r$ such that for each $i \in \{1,2\}, j \in \{1,2,\ldots, r\}$ we have
\[
|A_i \cap C_j| = \frac{1}{r}|A_i|.
\]
In other words, each set is partitioned evenly.
\end{theorem}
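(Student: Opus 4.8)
The plan is to reduce to the case of a prime number of parts by composing partitions, and then to handle primes on their own.

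\emph{Reduction to primes.} We induct on the number of prime factors of $r$ counted with multiplicity. If $r = ab$ with $a, b \ge 2$, first apply the statement for $a$ parts to $A_1, A_2$, obtaining convex cells $D_1, \dots, D_a$ with $|A_i \cap D_k| = |A_i|/a$; then for each $k$ treat $D_k$ as the ambient convex region — the sets $A_1 \cap D_k$, $A_2 \cap D_k$ remain in general position, and after a generic choice of cuts we may assume no cut passes through a given point — and apply the statement for $b$ parts inside $D_k$. A convex partition of a convex region into convex pieces, patched over the $D_k$, is again a convex partition of $\R^2$, now into $ab = r$ cells, each containing $\tfrac1b\cdot\tfrac1a|A_i| = \tfrac1r|A_i|$ points of $A_i$. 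Hence it suffices to prove the theorem for $r = p$ prime; the identical scheme works when the ambient space is a half-plane (or any convex region), which is all we shall need below. For $p = 2$ the statement is exactly the planar ham sandwich theorem: one line simultaneously bisects $A_1$ and $A_2$.

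\emph{The prime case.} Write $|A_i| = p\,n_i$. The core is a cutting lemma: there is a partition of the plane into two convex regions $H^+, H^-$ separated by a line, and an integer $1 \le j \le p-1$, such that $|A_i \cap H^+| = j\,n_i$ for $i = 1,2$ (boundary points assigned to either side as needed), and hence $|A_i \cap H^-| = (p-j)\,n_i$. Granting this, $H^+$ and $H^-$ are convex regions whose intersections with each $A_i$ have sizes divisible by $j$ and $p - j$ respectively, so by the reduction above $H^+$ splits into $j$ and $H^-$ into $p-j$ convex equitable cells; patching yields the desired $p$-cell partition. To find the separating line $\ell$ one fixes an index, considers the family of lines having a prescribed number of points of $A_1$ on a chosen side, and tracks the number of points of $A_2$ on that side: it changes by $\pm 1$ whenever $\ell$ passes a point, and a discrete intermediate-value argument across this rotating family of lines should produce a proportional split $(j\,n_1, j\,n_2)$ for a suitable $j$.

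\emph{The main obstacle} is exactly the cutting lemma, which must be set up with care. When $p$ is even one simply takes the ham sandwich line and recurses with $p/2$ cells on each side, so the genuine difficulty is $p$ odd, where no line bisects both sets and one must reach an unbalanced ratio $j:(p-j)$. Controlling how the $A_2$-count can jump as the combinatorial type of $\ell$ varies, and verifying that a suitable proportional cut is always attainable for every configuration — including degenerate ones such as linearly separated sets, where one may have to cut the plane in a more elaborate way than with a single line — is the technical heart of the argument. An alternative is the topological route hinted at in the introduction: parametrize convex $p$-partitions, say by power diagrams, restrict to those equipartitioning $A_1$, note that $\Z_p$ acts freely on this configuration space, and feed it through a $\Z_p$-equivariant test map recording the $A_2$-masses of the cells; an equivariant obstruction then forces a zero, the work now being to compute the relevant equivariant invariants of the configuration space.
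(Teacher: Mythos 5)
The theorem you are asked to prove is stated in the paper as a known result and is only cited there (Ito--Uehara--Yokoyama, Bespamyatnikh--Kirkpatrick--Snoeyink, Sakai); however, the paper's own main theorem is proved by adapting the Bespamyatnikh--Kirkpatrick--Snoeyink scheme, so the relevant benchmark is that argument. Your multiplicative reduction $r=ab$ and the patching of sub-partitions inside convex cells are sound (though note that after a $j:(p-j)$ cut you must invoke the theorem for the arbitrary integer $j<p$, so the induction should be a strong induction on $r$ rather than literally a reduction to primes), and $p=2$ is indeed the ham sandwich theorem.

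The genuine gap is your ``cutting lemma.'' It is false that for every configuration some line produces a simultaneous proportional split $(jn_1,jn_2)$ versus $((p-j)n_1,(p-j)n_2)$ for some $j$. For a fixed $j$, as the direction of the cutting line rotates, the halfplane containing exactly $jn_1$ points of $A_1$ may contain strictly more than $jn_2$ points of $A_2$ for \emph{every} direction (or strictly fewer for every direction); your discrete intermediate-value argument only rules out the mixed case for that particular $j$, and the bad case can occur for all $j$ simultaneously. This is exactly why the paper (following \cite{Bespamyatnikh:2000tn}) assigns a sign to each $j\in\{1,\dots,r-1\}$ according to which of these two alternatives holds for $j$-critical halfplanes, invokes Theorem \ref{theorem-summands} to extract either a like-signed pair (which does yield a line cut) or a like-signed \emph{triple} $(r_1,r_2,r_3)$ summing to $r$, and in the latter case constructs a partition into three convex pieces (a $3$-fan) via a two-parameter family of canonical cuttings and the KKM theorem. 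Your proposal acknowledges that ``one may have to cut the plane in a more elaborate way than with a single line'' but supplies no construction and no topological argument for that case; the equivariant alternative you mention is likewise only a pointer. Since the $3$-fan case is the entire technical content of the theorem for odd $r$ (even $r=3$ already requires it), the proof is not complete.
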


The continuous version of the theorem above has been generalized to convex partitions of $\rr^d$ with $d$ measures \cite{Soberon:2012kp, Karasev:2014gi, Blagojevic:2014ey}.  The high-dimensional versions of Karasev, Hubard, and Aronov \cite{Karasev:2014gi}, and of Blagojevi\'{c} and Ziegler \cite{Blagojevic:2014ey} hold in a much more general setting.  These were motivated by a problem of Nandakumar and Rao, which has been recently solved \cite{Akopyan:2018tr}.  A discrete version in high dimensions was recently established \cite{Blagojevic:2019gb}.  Theorem \ref{theorem-equipartition-of-points} can be bootstrapped to obtain partitions of measures where each part has positive size in an arbitrary number of measures \cite{BPSZ17}. The planar version has applications to drawings of political district maps \cite{Humphreys:2011iy, Soberon:2017kt}.

In this manuscript we are interested in splitting families of lines in $\rr^2$ instead of families of points.  Our main result is an extension of Theorem \ref{theorem-equipartition-of-points} to families of lines.  We use the following definition for what it means to split a family of lines.  Given a set $L$ of lines in the plane such that no two lines of $L$ are parallel, let $I(L)$ be the set of pairwise intersection points of $L$.  We call $I(L)$ the \textbf{incidence set} of $L$.

\begin{definition}
	Given a closed set $K \subset \rr^d$, and a set of lines $L$ in the plane such that no two sets of lines of $L$ are parallel, we say that $K$ \textbf{encloses} $L$ if 
	\[
	I(L) \subset K.
	\]
\end{definition}  

If $K$ is convex, the condition above is equivalent to $\conv( I(L)) \subset K$.  A ham sandwich theorem with this definition was proved by Dujmovi\'{c} and Langerman.

\begin{theorem}[Dujmovi\'{c}, Langerman 2013 \cite{Dujmovic:2013bi}]\label{theorem-dujmovic}\label{theorem-twoparts-for-lines}
	Given two finite sets $A$, $B$ of lines each in the plane, if no two lines of $A \cup B$ are parallel, there exists a line $\ell$ such that each of the two closed half-spaces it defines encloses a subset of at least $\sqrt{|A|}$ lines of $A$ and a subset of at least $\sqrt{|B|}$ lines of $B$.
\end{theorem}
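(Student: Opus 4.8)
To prove the theorem I would first recast the enclosing condition as a statement about monotone subsequences of a permutation, and then construct the line by a two-parameter topological argument. Write $m=|A|$ and $n=|B|$, and assume $A\cup B$ is in general position (a small perturbation of the lines only makes the conclusion harder to achieve, so this is harmless). Fix an auxiliary line $\ell$ that is not parallel to any line of $A\cup B$ and choose coordinates making $\ell$ the $x$-axis. Each $\ell_i$ meets $\ell$ in a single point; list the lines of $A$ from left to right by these crossing points and record the direction angle $\psi_i\in(0,\pi)$ of each $\ell_i$. A short computation with two lines shows that, when their crossing points satisfy $x_i<x_j$, the intersection point $\ell_i\cap\ell_j$ lies in the closed upper half-plane exactly when $\psi_i<\psi_j$ and in the closed lower half-plane exactly when $\psi_i>\psi_j$. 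Hence a subset $S\subseteq A$ is enclosed by the upper half-plane precisely when the angles of its lines increase in the left-to-right order, and by the lower half-plane precisely when they decrease. So the largest subset of $A$ enclosed by one half-plane of $\ell$ is the longest increasing subsequence of the angle word of $A$ along $\ell$, and by the other half-plane the longest decreasing subsequence; by the Erd\H{o}s--Szekeres theorem the product of these two lengths is at least $m$, and likewise for $B$.

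Next I would treat a single set by a sweep. Keeping the direction of $\ell$ fixed, translate it from $y=-\infty$ to $y=+\infty$. At the two extremes one half-plane encloses all of $A$ while the other encloses a single line, so the pair $\bigl(\mathrm{LIS}_A,\mathrm{LDS}_A\bigr)$ travels from $(m,1)$ to $(1,m)$. Each time $\ell$ passes an intersection point of $A$, the left-to-right order of the lines of $A$ along $\ell$ undergoes exactly one adjacent transposition, and such a transposition changes each of $\mathrm{LIS}_A$ and $\mathrm{LDS}_A$ by at most $1$. Considering the first position at which $\mathrm{LIS}_A\le\mathrm{LDS}_A$, and using that the two were nearly equal just before it and that $\mathrm{LIS}_A\cdot\mathrm{LDS}_A\ge m$ at every stage, one checks that at that position both half-planes of $\ell$ enclose at least $\sqrt m$ lines of $A$ (the small values of $m$ and the rounding must be handled so that the constant in front of $\sqrt m$ is exactly $1$).

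For two sets I would promote the direction of $\ell$ to a second parameter. View a directed line as a pair $(\theta,t)$, with $\theta\in S^1$ its direction and $t\in\mathbb{R}$ its signed offset, so that the reversal $(\theta,t)\mapsto(\theta+\pi,-t)$ fixes the underlying line and interchanges its two half-planes. For each $\theta$ the sweep above gives an offset $\beta_A(\theta)$ for which $\ell(\theta,\beta_A(\theta))$ is balanced for $A$, and an offset $\beta_B(\theta)$ balanced for $B$. The reversal symmetry forces $\beta_A(\theta+\pi)=-\beta_A(\theta)$ and $\beta_B(\theta+\pi)=-\beta_B(\theta)$, so $h(\theta)=\beta_A(\theta)-\beta_B(\theta)$ is odd on $S^1$ and hence vanishes at some $\theta_0$ by the one-dimensional Borsuk--Ulam theorem (an intermediate value argument). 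The line $\ell\bigl(\theta_0,\beta_A(\theta_0)\bigr)$ is then balanced for both families, which is the assertion.

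The main obstacle is exactly this last step. Because $\mathrm{LIS}$ and $\mathrm{LDS}$ are integer valued, $\beta_A$ and $\beta_B$ are only piecewise defined and may jump as $\theta$ crosses one of the finitely many critical directions --- those at which two intersection points of $A$, or of $B$, share a height, or at which $\theta$ is parallel to some line. Turning ``an odd function has a zero'' into a genuine proof therefore requires a careful accounting of how the sign of $h$ can change across these critical directions (or a compactness argument after perturbation), and it is here that one must also work to retain the sharp bounds $\sqrt{|A|}$ and $\sqrt{|B|}$ rather than $c\sqrt{|A|}$ and $c\sqrt{|B|}$. Alternative routes worth attempting would be to encode the balanced-line condition into two bisection conditions and invoke the ham sandwich theorem (Theorem~\ref{theorem-equipartition-of-points} with $r=2$) directly, or to pass to projective duality and reformulate everything in terms of a point lying above, respectively below, all the connecting lines of the two dual point sets; the naive forms of both fail, so the topological argument above appears to carry the real content.
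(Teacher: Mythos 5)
A point of reference first: the paper does not prove this theorem at all --- it is quoted from Dujmovi\'c and Langerman --- so your proposal can only be compared with the paper's related machinery. Your first paragraph is essentially the paper's Lemma \ref{lemma-two-parts}, with one slip: you order by direction angle $\psi\in(0,\pi)$, but the dictionary ``$\ell_i\cap\ell_j$ lies above $\ell$ iff $\psi_i<\psi_j$'' is false for angles (take $\psi_i=\pi/4$, $\psi_j=3\pi/4$: the angles increase, the slopes are $1$ and $-1$, and the intersection lies below). The correct monotone parameter is the slope, as in Lemma \ref{lemma-two-parts}. Your one-set sweep also does not quite reach $\sqrt m$ as written: at the first swap where $\mathrm{LIS}\le\mathrm{LDS}$ the pair can be $(\lceil\sqrt m\rceil-1,\lceil\sqrt m\rceil)$, e.g.\ $(2,3)$ for $m=6$, so the smaller side falls short. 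The fix is to put $\ell$ through the critical intersection point itself: the closed upper half-plane still encloses the $\mathrm{LIS}$-set from just before the swap and the closed lower one encloses the $\mathrm{LDS}$-set from just after, and then $L>D$ with $LD\ge m$ forces $L>\sqrt m$ while $L'\le D'$ with $L'D'\ge m$ forces $D'\ge\sqrt m$.

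The genuine gap is the two-set step, which you yourself leave open as ``the main obstacle.'' You posit single balanced offsets $\beta_A(\theta),\beta_B(\theta)$, concede they may jump, and defer the analysis of how the sign of $\beta_A-\beta_B$ changes across critical directions; as it stands this is a plan, not a proof. The missing idea is to track the two \emph{extreme} balanced offsets rather than one chosen offset, because these are automatically continuous. With $k=\lceil\sqrt{|A|}\,\rceil$ and $n_\theta$ the unit normal, set
\[
t_A^{+}(\theta)=\max_{A'\subset A,\,|A'|=k}\ \min_{p\in I(A')}\langle p,n_\theta\rangle,
\qquad
t_A^{-}(\theta)=\min_{A'\subset A,\,|A'|=k}\ \max_{p\in I(A')}\langle p,n_\theta\rangle .
\]
The offsets $t$ for which both closed half-planes of $\{x:\langle x,n_\theta\rangle=t\}$ enclose $k$ lines of $A$ form exactly the interval $[t_A^{-}(\theta),t_A^{+}(\theta)]$, nonempty by the (repaired) sweep; each of $t_A^{\pm}$ is a max of minima (resp.\ min of maxima) of finitely many linear functions of $n_\theta$, hence continuous, and $t_A^{\pm}(\theta+\pi)=-t_A^{\mp}(\theta)$. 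The continuous odd function $g(\theta)=\max\bigl(t_B^{-}-t_A^{+},0\bigr)-\max\bigl(t_A^{-}-t_B^{+},0\bigr)$ vanishes for some $\theta_0$, and $g(\theta_0)=0$ forces the $A$- and $B$-intervals to meet, giving the desired line. This continuity of the critical support line as the direction rotates is exactly the observation the paper makes in Lemma \ref{lemma-twocutting}, so the obstacle you flag is surmountable --- but supplying it is the content of the proof, and without it your argument is incomplete.
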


  Given two sets of lines, we obtain convex partitions of the plane where each part encloses a large subset of lines.  Partitions related to families of lines have been studied before in other settings.  For instance, the celebrated polynomial partitioning method of Guth and Katz shows the existence of an equipartition of $I(L)$ using a low-degree polynomial, where no line intersects too many regions \cite{Guth:2015tu, Guth:2016wo}.  A recent work of Schnider proves extensions of the ham sandwich theorem for families of lines in $\rr^3$ \cite{Schnider:2019tg}, under a different interpretation of separation of lines.  In order to state our main result, we need the following definition.

\begin{definition}
	We say that a set of lines $L$ in the plane is in general position if
	\begin{itemize}
		\item No two lines in $L$ are parallel
		\item No three lines in $L$ are concurrent
		\item If three points in $I(L)$ are colinear, they belong to the same line in $L$.
	\end{itemize}
\end{definition}

With this, our main result is the following.

\begin{theorem}\label{theorem-main}
	Let $A, B$ be two finite sets of lines in $\rr^2$ such that $A \cup B$ is in general position, and let $r$ be a fixed positive integer.  Then, there is a convex partition $(C_1, \ldots, C_r)$ of $\rr^2$ into $r$ parts such that for all $j \in \{1, \ldots, r\}$ there exist sets $A_j \subset A$, $B_j \subset B$ such that $I(A_j) \subset C_j, I(B_j) \subset C_j$ and
	\[
	|A_j| \ge r^{\ln (2/3)} |A|^{1/r}-2r, \qquad 	|B_j| \ge r^{\ln (2/3)} |B|^{1/r}-2r.
	\]
\end{theorem}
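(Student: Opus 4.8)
The plan is to prove Theorem~\ref{theorem-main} by induction on $r$, with Theorem~\ref{theorem-dujmovic} serving as the base case $r=2$ (and $r=1$ being trivial, taking $C_1=\rr^2$, $A_1=A$, $B_1=B$). The one structural fact that makes any recursion legal is the following monotonicity: if $C\subset\rr^2$ is convex and $I(A')\subset C$, then for every $A''\subset A'$ we have $I(A'')\subset I(A')\subset C$. Hence, once a convex region $C$ has been produced together with subfamilies $A'\subset A$, $B'\subset B$ that it encloses, every later cutting line may be intersected with $C$ without destroying convexity and without invalidating the enclosure of any sub-subfamily. Thus a convex partition into $r$ parts may be built by a recursive sequence of cuts organized as a tree whose leaves are the $r$ parts.

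When $r=2^k$ this gives the result almost for free: apply Theorem~\ref{theorem-dujmovic} to $(A,B)$ to get a line $\ell$ with half-planes $H^+,H^-$ enclosing subfamilies of sizes $\ge|A|^{1/2},|B|^{1/2}$ on each side, then recurse inside $H^+$ on the enclosed pair and inside $H^-$ on the enclosed pair, each time into half as many parts. After $k$ levels the map $n\mapsto n^{1/2}$ has been applied $k$ times, producing parts that enclose $\ge|A|^{1/r}$ lines of $A$ and $\ge|B|^{1/r}$ lines of $B$, with no multiplicative loss.

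The real work — and the step I expect to be the main obstacle — is the case of general $r$, where one needs an \emph{unbalanced} split. Here the naive recursion fails: a single straight cut can put at most $O(\min(|A|,|B|)^{1/2})$ lines on one side in the worst case (this is exactly the sharpness of Theorem~\ref{theorem-dujmovic}, which the paper also proves), so dividing $r$ into $\lceil r/2\rceil$ and $\lfloor r/2\rfloor$ and recursing starves the larger branch, which would need roughly $|A|^{\lceil r/2\rceil/r}\gg|A|^{1/2}$ lines. The remedy is to carry out the inductive step not with one cut but with a genuine multi-part convex-partition primitive. Concretely, I would pass to suitable subfamilies $A'\subset A$, $B'\subset B$, form their incidence sets $I(A'),I(B')$, apply Theorem~\ref{theorem-equipartition-of-points} to obtain $r$ convex regions each carrying exactly a $1/r$-fraction of $I(A')$ and of $I(B')$, and then show that each such region encloses a subfamily of $A'$ (and of $B'$) of size $\Theta(|A'|^{1/r})$. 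The crucial — and delicate — point is that the regions are precisely the ones produced by Theorem~\ref{theorem-equipartition-of-points}, not arbitrary convex sets: an arbitrary convex region carrying a constant fraction of the incidences of $m$ lines may fail to enclose more than a constant number of them, so extracting a subfamily of polynomial size $\Theta(m^{1/r})$ must exploit both the structure of those regions and a careful choice of $A',B'$ (for instance chosen so that the cloud $I(A')$ is in convex position, or built recursively from cuts so that the required sub-chains are available). Making this extraction work simultaneously for $A$ and for $B$ is where the argument is hardest.

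It remains to account for the constants. Writing the recursion in the form ``$r$ parts for $n$ lines'' $\rightsquigarrow$ ``about $\lceil r/2\rceil$ parts for about $n^{1/2}$ lines'' with a bounded multiplicative loss at the unbalanced steps and a bounded additive loss from rounding $n^{1/r}$-type quantities to integers, one unrolls over the $\Theta(\log r)$ levels of the recursion: the per-level multiplicative loss compounds into the factor $r^{\ln(2/3)}$, and the accumulated additive losses compound into the term $-2r$. This bookkeeping is routine once the unbalanced step is in hand; the mathematical content of the theorem lives in that step.
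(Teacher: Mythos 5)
Your outline correctly identifies where the difficulty lies, but it does not close it: the step you yourself flag as ``the crucial --- and delicate --- point'' is left entirely open, and it is in fact essentially the open problem the paper poses in its remarks. You propose to apply Theorem~\ref{theorem-equipartition-of-points} to the incidence sets $I(A')$, $I(B')$ and then argue that each of the $r$ resulting convex regions, carrying a $1/r$-fraction of the incidence points, must enclose $\Theta(|A'|^{1/r})$ lines of $A'$. No such statement is proved or known; the paper explicitly asks (Section~\ref{section-remarks}) whether $\mu_A(C_1)\cdots\mu_A(C_r)\ge |A|$, or even $\max_i \mu_A(C_i)\ge |A|^{1/r}$, holds for convex partitions coming from power diagrams, precisely because having it would allow the kind of one-shot argument you sketch. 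The suggestions you offer for rescuing the step (choosing $A'$ so that $I(A')$ is in convex position, or ``so that the required sub-chains are available'') are not substantiated and do not obviously lead anywhere. Without this extraction lemma the inductive step for general $r$ does not exist, so the proposal is not a proof.

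For comparison, the paper avoids ever needing an Erd\H{o}s--Szekeres bound for general $r$-part partitions. It proves such bounds only for partitions into two parts by a line (Lemma~\ref{lemma-two-parts}, via Erd\H{o}s--Szekeres on slopes) and into two or three parts by rays from a point (Lemmas~\ref{lemma-xue} and~\ref{lemma-xue2}, via Dilworth, at the cost of the factor $2/3$). It then shows (Lemma~\ref{lemma-strong-equitable-cuts}) that for every $r$ one can always find an \emph{equitable} two- or three-part cut in which part $C_i$ encloses roughly $(2|A|/3)^{r_i/r}$ lines of $A$ and $(2|B|/3)^{r_i/r}$ lines of $B$ simultaneously, for some composition $r_1+r_2(+r_3)=r$; this is the unbalanced split you were looking for, and it is obtained not from Theorem~\ref{theorem-equipartition-of-points} but from a sign assignment on $\{1,\dots,r-1\}$ via critical half-planes, the combinatorial Theorem~\ref{theorem-summands}, and a KKM-type coloring argument on a two-parameter family of canonical three-part cuttings. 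Recursing on each piece then yields the theorem, with the factor $r^{\ln(2/3)}$ coming from compounding the $2/3$ of the three-part lemma over the recursion depth and the $-2r$ from the additive $-2$ in the definition of the critical half-planes (not from the balanced-halving bookkeeping you describe). If you want to complete a proof along the paper's general lines, the equitable-cut lemma is the piece you need to supply.
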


  Notice that, if $r$ is a power of two, a repeated application of Theorem \ref{theorem-dujmovic} implies the existence of a convex partition into $r$ parts where each part encloses $|A|^{1/r}$ lines of $A$ and $|B|^{1/r}$ lines of $B$.  We do not know if the leading factor  $r^{\ln (2/3)} \sim r^{-0.405}$ is necessary in general.  The dependence of Theorem \ref{theorem-main} on $|A|$ and $|B|$ is optimal up to that factor, as the following theorem shows.

\begin{theorem}\label{theorem-upper-bound}
  Let $n, r$ be positive integers.  There exist sets $A, B$ of $n$ lines in the plane in general position such that for every convex partition $C_1, \ldots, C_r$ of the plane into $r$ sets, there exists some $j \in \{1, \ldots, r \}$ such that either $C_j$ encloses at most $\lceil n^{1/r} \rceil$ lines of $A$ or $C_j$ encloses at most $\lceil n^{1/r} \rceil$ lines of $B$.	
\end{theorem}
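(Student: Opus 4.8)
The plan is to construct explicit families $A$ and $B$ and show that no convex partition into $r$ parts can have every part enclose more than $\lceil n^{1/r}\rceil$ lines of each family. I would first reduce to the case $n=m^r$ with $m=\lceil n^{1/r}\rceil$, using that the desired property is monotone under deleting lines: if $A'\subseteq A$, then any convex set enclosing at most $k$ lines of $A$ encloses at most $k$ lines of $A'$, since an enclosed subset of $A'$ is an enclosed subset of $A$. Hence it suffices to build $\widehat A,\widehat B$ of exactly $m^r$ lines in general position for which no convex partition into $r$ parts has every part enclosing $\ge m+1$ lines of $\widehat A$ and $\ge m+1$ of $\widehat B$; deleting $m^r-n\ge 0$ lines from each (general position being inherited by subsets) then yields the theorem for the original $n$.

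For the construction I would take $A$ and $B$ to be lines tangent to suitable convex curves, with tangent points placed at $r$ vastly separated scales nested in groups of $m$: indexing by $[m]^r$, the line with index $(i_1,\dots,i_r)$ touches its curve at arclength parameter $\sum_{\ell=1}^r i_\ell W^{\,r+1-\ell}$ for a huge $W=W(m,r)$, so that the family breaks into $m$ clusters, each into $m$ subclusters, $r$ times over. The curves for $A$ and $B$ must be chosen so that no single convex region can serve as a ``cap region'' for both families at once: for tangent lines to a convex curve, enclosing a $k$-subset forces a convex set to contain the polygonal cap circumscribing the corresponding arc, and the two curves are arranged so that such caps for $A$ and for $B$ stick out in transverse directions at every scale. (A naive choice, e.g.\ tangents to two perpendicular parabolas, does not work: one can find a line both of whose half-planes already contain a large cap of each family, producing a balanced partition, so the curves and the scale parameters have to be tuned precisely to block this.) General position is generic and easy to ensure; note that three tangent lines to a strictly convex curve are never concurrent, so only the collinearity condition needs a small perturbation.

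To analyze a hypothetical good partition $C_1,\dots,C_r$, I would first use convexity and the tiling property to show that the caps forcing each $C_j$ to be rich in $A$ — hence the corresponding arcs of $A$'s curve — have pairwise disjoint interiors, and likewise for $B$. Then I would induct on $r$: some region sitting at the extreme end of one of the two configurations must, precisely because it is also required to be rich in the other family, contain a cap that by the scale separation already exhausts an entire top-level cluster (of size $m^{r-1}$, not merely $m+1$); removing this region leaves a convex partition of the complementary region into $r-1$ parts to which a form of the statement applies, and multiplying through the $r$ levels forces $|A|\ge (m+1)^r>m^r$, a contradiction.

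The main obstacle is the \emph{coupling} step. For $A$ by itself a convex partition can be perfectly balanced — the arcs are just $r$ disjoint intervals, which alone give only the trivial bound $n/r$ — so the entire strength of $\lceil n^{1/r}\rceil$ has to come from the interplay between $A$ and $B$. Getting the curves and the scale parameters right so that richness in both families simultaneously is exactly what triggers the multiplicative loss, and then making the peeling induction go through for arbitrary convex regions rather than for lines, is where the real work lies.
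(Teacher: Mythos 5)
Your reduction to $n=m^r$ is fine, but the heart of the argument --- the construction and the coupling between $A$ and $B$ --- is not actually carried out, and you say as much (``the curves and the scale parameters have to be tuned precisely,'' ``where the real work lies''). Two concrete problems. First, the construction is unspecified: you observe that tangents to two perpendicular parabolas fail and that the curves must be arranged so that caps ``stick out in transverse directions at every scale,'' but you never exhibit curves and scale parameters for which this holds, and it is exactly this arrangement that carries the whole theorem (as you yourself note, each family alone admits a perfectly balanced convex partition, so all the strength must come from the coupling). Second, the peeling induction does not close: the complement of a convex region is not convex, so ``removing this region'' does not leave a convex partition into $r-1$ parts to which any form of the statement applies; and the accounting is off --- if one region exhausts a top-level cluster of size $m^{r-1}$, the remaining $m^{r-1}(m-1)$ lines distributed over $r-1$ regions do not produce the claimed $(m+1)^r>m^r$ by any multiplication you have set up.

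The paper resolves the coupling with a much simpler device that your proposal is missing. Only one family, $A$, needs to be hard to split, and only with respect to partitions by \emph{parallel vertical strips}: one takes $A$ dual to the lattice-like point set $\{\sum_i x_i v_i : x\in[a]^r\}$ with $\|v_1\|\ll\cdots\ll\|v_r\|$ and increasing slopes, so that the $x$-coordinate of the intersection of two lines of $A$ falls into one of $r$ well-separated intervals determined by the largest index at which the two lattice points differ; a pigeonhole argument shows that among any $r$ vertical strips, one meets only a single such interval and hence encloses at most $a=n^{1/r}$ lines. The second family $B$ is then just any family whose incidence set lies in a tiny disk translated very far upward: if every part of a convex partition is to enclose lines of both families, every part must meet both $\conv I(A)$ and $\conv I(B)$, which forces all separating boundaries to be nearly vertical near $I(A)$ and reduces the problem to the vertical-strip case. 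This ``send $I(B)$ to infinity in one direction'' step is the key idea your proposal lacks; without it, or a fully worked-out substitute for the transverse-caps arrangement and the induction, the proposal remains a plan rather than a proof.
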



The proof of Theorem \ref{theorem-main} is similar to the first proof of Theorem \ref{theorem-equipartition-of-points} \cite{Bespamyatnikh:2000tn}.  The main idea is to use only partitions of $\rr^2$ into two or three convex pieces, and subdivide each piece until we obtain the desired partition. The only topological tool needed is the Knaster-Kuratowski-Mazurkiewicz theorem \cite{Knaster:1929vi} for a triangle.  In Section \ref{section-partition-into-three-parts} we prove an Erd\H{o}s-Szekeres type lemma which is crucial in the main proof.  In Sections \ref{section-equitable-cuttings} and \ref{section-region-construction} we discuss properties of partitions of $\rr^2$ into three parts.  In Section \ref{section-big-proof} we prove Theorem \ref{theorem-main}, and we prove our upper bounds in Section \ref{section-upper-bounds}. Finally, in Section \ref{section-remarks} we include remarks in the proof.  We discuss the extensions of the Erd\H{o}s-Szekeres theorem which would allow us to extend other proofs of Theorem \ref{theorem-equipartition-of-points} to the setting with lines.

\section{Partitions into two and three parts}\label{section-partition-into-three-parts}

Given a set $A$ of lines in the plane, and $K \subset \rr^d$, we consider 

\[
\mu_A(K) = \max\{|A'|: A' \subset A, I(A') \subset K \}.
\]

We now prove a couple of properties of $\mu_A$ for convex partitions of $\rr^2$ into two or three parts.

\begin{lemma}\label{lemma-two-parts}
	Let $A$ be a set of $n$ lines in $\rr^2$ in general position.  Let $\ell$ be another line in the plane, which defines a convex partition of $\rr^2$ into two parts $C_1, C_2$.  Then we have
	\[
	\mu_A(C_1) \mu_A(C_2) \ge n.
	\]
\end{lemma}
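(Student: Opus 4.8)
The plan is to reduce the lemma to the classical Erd\H{o}s--Szekeres theorem by identifying the right real parameter attached to each line. Choose coordinates so that $\ell$ is the $x$-axis, $C_1 = \{y \ge 0\}$ and $C_2 = \{y \le 0\}$. Every line of $A$ that is not parallel to $\ell$ can be written uniquely as $x = t_a y + x_a$, where $x_a$ is the abscissa of its crossing with $\ell$ and $t_a \in \rr$ is its slope viewed as a function of $y$ (i.e.\ the reciprocal of the ordinary slope). Since no two lines of $A$ are parallel, the numbers $t_a$ are pairwise distinct. For two such lines $a \ne b$, solving $t_a y + x_a = t_b y + x_b$ shows their intersection point has height
\[
y_{ab} = \frac{x_b - x_a}{t_a - t_b}.
\]
Consequently, if $x_a < x_b$, then $y_{ab} > 0$ exactly when $t_a > t_b$ and $y_{ab} < 0$ exactly when $t_a < t_b$; so the intersection point lies in $C_1$ or in $C_2$ purely according to the order relation between $t_a$ and $t_b$.

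I would first handle the generic case in which no line of $A$ is parallel to $\ell$. List the lines $a_1, \dots, a_n$ so that $x_{a_1} < \dots < x_{a_n}$; coincidences among the $x_{a_i}$ may be broken arbitrarily, since a shared crossing occurs on $\ell \subseteq C_1 \cap C_2$ and does not affect any of the claims below. Consider the sequence of distinct reals $t_{a_1}, \dots, t_{a_n}$. If $a_{i_1}, \dots, a_{i_k}$ with $i_1 < \dots < i_k$ is a subfamily along which $t$ is strictly decreasing, then by the displayed formula all of its pairwise intersections lie in $C_1$, so $\mu_A(C_1) \ge k$; symmetrically a subfamily along which $t$ is strictly increasing gives the matching bound for $C_2$. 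By the Erd\H{o}s--Szekeres theorem, if $p$ is the length of the longest increasing subsequence and $q$ the length of the longest decreasing subsequence of $t_{a_1}, \dots, t_{a_n}$, then $pq \ge n$. Therefore $\mu_A(C_1)\,\mu_A(C_2) \ge q\,p \ge n$.

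Finally I would dispose of the remaining case, where exactly one line $a_0 \in A$ is parallel to $\ell$ (there is at most one, by general position). Such a line lies entirely in $C_1$ or entirely in $C_2$; say $a_0 \subseteq C_1$. Applying the generic case to $A \setminus \{a_0\}$ produces subfamilies $A' \subseteq A \setminus \{a_0\}$ with $I(A') \subseteq C_1$ and $A'' \subseteq A \setminus \{a_0\}$ with $I(A'') \subseteq C_2$ such that $|A'|\,|A''| \ge n-1$. Every intersection of $a_0$ with a line of $A'$ lies on $a_0 \subseteq C_1$, so $A' \cup \{a_0\}$ still has all pairwise intersections in $C_1$; hence $\mu_A(C_1) \ge |A'| + 1$ and $\mu_A(C_2) \ge |A''|$, giving $\mu_A(C_1)\,\mu_A(C_2) \ge (|A'|+1)|A''| = |A'|\,|A''| + |A''| \ge (n-1) + 1 = n$, using $|A''| \ge 1$ when $n \ge 2$ (the case $n = 1$ is trivial, a single line having empty incidence set). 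The only genuinely creative step is recognizing that the inverse slope $t_a$ is the parameter whose monotonicity controls on which side of $\ell$ an intersection falls; after that, everything is the standard Erd\H{o}s--Szekeres bound, so the main thing to get right is the sign bookkeeping in the computation of $y_{ab}$.
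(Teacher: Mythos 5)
Your proof is correct and follows essentially the same route as the paper: order the lines of $A$ by where they cross $\ell$, observe that monotone subsequences of the (inverse) slopes have all pairwise intersections on one side, and invoke the Erd\H{o}s--Szekeres bound $pq \ge n$. The only difference is cosmetic --- the paper takes $\ell$ vertical and disposes of the degenerate cases (a line of $A$ parallel to $\ell$, or $I(A)$ meeting $\ell$) by a perturbation argument, whereas you handle them explicitly, which is if anything slightly cleaner.
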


\begin{proof}
	First, assume that $\ell$ does not contain any point of $I(A)$ and is not parallel to any line in $\ell$.  We may assume that $\ell$ is a vertical line.  We order the points of the form $\ell \cap a$ with lines in $a \in A$ by their vertical coordinates, from bottom to top.  Suppose that $\ell \cap a$ is the $i$-th point.  We define $x_i$ to be the slope of $a$.  Consider the sequence $(x_1, \ldots, x_n)$.  An increasing subsequence defines a subset of $A$ enclosed by the left side of $\ell$, while a decreasing subsequence defines a subset of $A$ enclosed by the right side of $\ell$.  Therefore, the Erd\H{o}s-Szekeres theorem finishes this case.
	
	If $\ell \in A$, then we apply the argument above to $A \setminus \ell$ and add $\ell$ to each enclosed set.  If $\ell \not\in A$ and it either contains a point of $I(A)$, is parallel to a line in $A$, or both, a standard approximation argument finishes the proof.
\end{proof}

\begin{lemma}\label{lemma-xue}
	Let $A$ be a set of $n$ lines in $\rr^2$ in general position.  Let $r_1$, $r_2$ be two rays that star from a common point $p$.  The broken line $r_1 \cup r_2$ splits $\rr^2$ into two sets $C_1, C_2$.  Then,
	\[
	\mu_A(C_1) \mu_A (C_2) \ge \frac{2n}{3}.
	\]
\end{lemma}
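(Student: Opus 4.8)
The plan is to reduce the broken-line case to the straight-line case of Lemma~\ref{lemma-two-parts} by a clever choice of an auxiliary partition, accepting a constant-factor loss. First I would handle degeneracies exactly as in the proof of Lemma~\ref{lemma-two-parts}: by a standard approximation argument I may assume the apex $p$ does not lie on any line of $A$, the rays $r_1,r_2$ avoid the incidence set $I(A)$, and no ray is parallel to a line of $A$; I may also assume $p$ is a genuine ``corner'' (the two rays are not collinear), since otherwise $r_1\cup r_2$ is a line and Lemma~\ref{lemma-two-parts} already gives the stronger bound $\mu_A(C_1)\mu_A(C_2)\ge n$.

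The key geometric idea is to extend one of the rays through $p$ to a full line and compare the resulting two-part partition with the given one. Let $\ell_1\supset r_1$ be the line containing $r_1$; it splits $\rr^2$ into half-planes $H_1,H_2$ with, say, $C_1'\subset H_1$ and $C_2'\subset H_2$ where $\{C_1',C_2'\}$ is the two-part convex partition induced by $\ell_1$. Up to relabeling, $r_2$ lies in the closed half-plane $H_2$, so one of the two regions cut by the broken line $r_1\cup r_2$ — say $C_1$ — contains $H_1$ entirely, and the other, $C_2$, is contained in $H_2$; moreover $C_2$ is exactly $H_2$ with a convex ``wedge'' removed, and that removed wedge $W$ (bounded by $r_2$ and part of $\ell_1$) together with $C_2$ tiles $H_2$. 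Thus I have three pairwise-interior-disjoint convex regions $C_1\supset C_1'$, $C_2$, and $W$, with $C_2\cup W = C_2'$ and $\inter(C_2)\cap\inter(W)=\emptyset$. Now I invoke Lemma~\ref{lemma-two-parts} on the line $\ell_1$ to get $\mu_A(C_1')\,\mu_A(C_2')\ge n$, and since $C_1\supset C_1'$ we have $\mu_A(C_1)\ge\mu_A(C_1')$. It remains to show $\mu_A(C_2)\ge \tfrac{2}{3}\mu_A(C_2')$, for then $\mu_A(C_1)\mu_A(C_2)\ge \tfrac{2}{3}\mu_A(C_1')\mu_A(C_2')\ge \tfrac{2n}{3}$.

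The heart of the matter is therefore: if a convex region $C_2'$ is split by a further ray $r_2$ from a boundary point into two convex pieces $C_2$ and $W$, then $\mu_A(C_2)\ge\tfrac{2}{3}\mu_A(C_2')$. I expect this to be the main obstacle, and I would attack it via an Erd\H{o}s--Szekeres / Dilworth-type argument analogous to Lemma~\ref{lemma-two-parts}: let $A'\subset A$ realize $\mu_A(C_2')=:m$, so $I(A')\subset C_2'$. The ray $r_2$ cuts the convex set $\conv(I(A'))\subset C_2'$; I want to find $A''\subset A'$ with $|A''|\ge \tfrac{2}{3}m$ and $I(A'')\subset C_2$. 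Here I would again sweep a line (the line through $p$ and $r_2$, made vertical) across the points $\{\,a\cap r_2 : a\in A'\,\}$, order the slopes accordingly, and observe that an increasing subsequence gives lines whose pairwise intersections lie on one side of $r_2$ while a decreasing one gives the other side; combined with the convexity already known inside $C_2'$ this places the incidences of one monochromatic subsequence into $C_2$. The Erd\H{o}s--Szekeres bound $m\le (\text{longest increasing})\cdot(\text{longest decreasing})$ is not quite enough to get the clean $2/3$; instead I would use the sharper fact that for a sequence of length $m$ one of the two monotone subsequences has length at least $\tfrac{1+\sqrt{1+8(m-1)/?}}{?}$ — more robustly, I suspect the intended route is the elementary pigeonhole bound that among the three regions $C_1, C_2, W$ one may instead argue directly that $\mu_A(C_2)\cdot\mu_A(W)\ge$ something, or to note $\max\{\mu_A(C_2),\mu_A(W)\}$ is large and that $\mu_A(W)\le \mu_A(C_1)$ after another inclusion — so in the writeup I would trace carefully which of the three convex pieces absorbs which monochromatic chain, aiming to show that two of $\{\mu_A(C_1),\mu_A(C_2)\}$ together capture a $2/3$ fraction of the product from Lemma~\ref{lemma-two-parts}. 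The bookkeeping of which side of $r_2$ each monotone chain lands on, and ensuring the chain's incidences avoid the removed wedge, is where the real care is needed; everything else is the same approximation and relabeling routine as before.
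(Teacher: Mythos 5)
Your reduction breaks at its central step. The sub-claim you isolate as ``the heart of the matter'' --- that if the half-plane $C_2'$ is split by the ray $r_2$ into convex pieces $C_2$ and $W$, then $\mu_A(C_2)\ge\tfrac23\,\mu_A(C_2')$ --- is false. Cutting a convex region in two costs a square root, not a constant factor: if $A'$ realizes $\mu_A(C_2')=m$, then applying Lemma~\ref{lemma-two-parts} to $A'$ and the line through $r_2$ only gives $\mu_{A'}(C_2)\,\mu_{A'}(W)\ge m$, and in the Erd\H{o}s--Szekeres extremal configuration both factors are about $\sqrt{m}$, so $\mu_A(C_2)$ can genuinely be as small as roughly $\sqrt{\mu_A(C_2')}$. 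Your fallback ideas do not repair this: combining $\mu_A(C_1)\ge\max\{\mu_A(H_1),\mu_A(W)\}$ with $\mu_A(C_2)\mu_A(W)\ge\mu_A(H_2)$ only yields $\mu_A(C_1)^2\mu_A(C_2)\ge n$, which is strictly weaker than the claimed $\mu_A(C_1)\mu_A(C_2)\ge\tfrac{2n}{3}$. You sensed the difficulty (``not quite enough to get the clean $2/3$'') but the issue is not bookkeeping --- no decomposition of the form ``half-plane plus constant-factor loss on the wedge side'' can work, because the loss is polynomial, not constant.

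The paper's proof gets the $2/3$ from an entirely different mechanism. It classifies the lines of $A$ into four types according to which of the two rays they cross, forms three overlapping subsets $A_1,A_2,A_3$ whose cardinalities sum to $2n$ (so one has at least $\tfrac{2n}{3}$ elements by pigeonhole), and on each $A_i$ defines a bespoke partial order in which every chain has its pairwise intersections in $C_1$ and every antichain has its pairwise intersections in $C_2$. Dilworth's theorem applied to the largest $A_i$ then gives $\mu_A(C_1)\mu_A(C_2)\ge |A_i|\ge\tfrac{2n}{3}$. The constant $2/3$ is the pigeonhole constant from $|A_1|+|A_2|+|A_3|=2n$; it does not arise from comparing the broken-line partition to any straight-line partition. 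If you want to pursue your own route, you would need to replace the false sub-claim with an argument of this product type carried out directly for the broken line, which is essentially what the transitivity case analysis in the paper accomplishes.
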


\begin{proof}
	We assume that $r_1$ and $r_2$ are not contained in lines of $A$, that they do not contain points of $I(A)$ and that they are not parallel to lines of $A$.  A standard approximation argument shows that we do not lose generality by making these assumptions.  We may also assume without loss of generality that $r_2$ is the positive $y$-axis, $r_2$ form an acute angle with the positive $x$-axis, and $C_1$ is convex.  See Figure \ref{figure-xue}.  We split the lines in $A$ into four types:
	\begin{itemize}
		\item Type 1: lines intersecting $r_1$ but not $r_2$
		\item Type 2: lines intersecting $r_2$ but not $r_1$
		\item Type 3: lines intersecting both $r_1$ and $r_2$
		\item Type 4: lines intersecting neither $r_1$ nor $r_2$.
	\end{itemize}
	
	\begin{figure}
		\centerline{\includegraphics{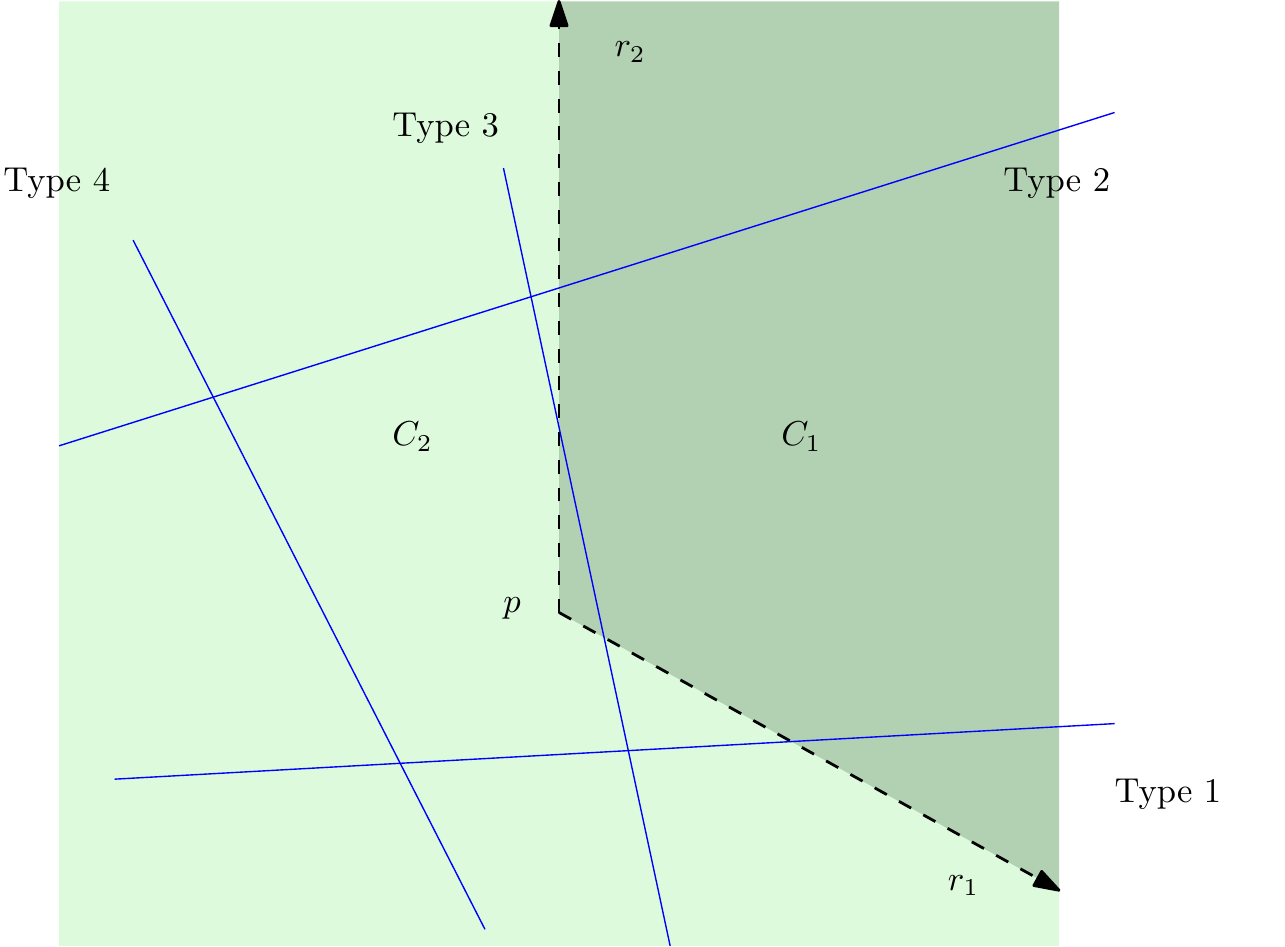}}
		\caption{Type assignment for the proof of Lemma \ref{lemma-xue}}
		\label{figure-xue}
	\end{figure}
	
	Then, we consider the sets 
	\begin{align*}
		A_1 & = \{ a \in A: a \mbox{ has type 2,3, or 4}\} \\
		A_2 & = \{ a \in A: a \mbox{ has type 1,3, or 4}\} \\
		A_3 & = \{ a \in A: a \mbox{ has type 1 or 2}\}
	\end{align*}
	
	Notice that $|A_1|+|A_2|+|A_3| = 2n$, so at least one of the sets has cardinality greater than or equal to $\frac{2n}{3}$.  We are going to define a partial order on each of the three sets in such a way that the elements of any chain pairwise intersect in $C_1$ and the element of any anti-chain pairwise intersect in $C_2$.  An application of Dilworth's theorem to the largest set among $A_1, A_2, A_3$ will give us the desired conclusion.
	
	Let us start with $A_1$.  For a line $\ell$ that intersects $r_2$, we define
	\begin{align*}
	d(\ell) & = \operatorname{dist}(p, r_2 \cap \ell). \\
	s(\ell) &= \mbox{ the slope of }\ell.	
	\end{align*}

	We say that $\ell_1$ is comparable to $\ell_2$ if either $\ell_1 = \ell_2$ or $\ell_1 \cap \ell_2 \in C_1$.  Furthermore, if $\ell_1$ and $\ell_2$ are comparable, we say that $\ell_1 \le \ell_2$ if either
	
	\begin{itemize}
		\item $\ell_1$ is of type $2$ and $\ell_2$ is of type $3$ or
		\item $\ell_1$ and $\ell_2$ are of the same type and $d(\ell_1) \le d(\ell_2)$.
	\end{itemize}
	
	In order to show that this is a partial order, we prove it is transitive.  Suppose that $\ell_1 \le \ell_2$ and $\ell_2 \le \ell_3$.
	
	\bigskip
	
	\textbf{Case 1.} The lines $\ell_1, \ell_2, \ell_3$ are all of type $2$.  In this case, we know $d(\ell_1) \le d(\ell_2)$ and $d(\ell_2) \le d(\ell_3)$, so $d(\ell_1) \le d(\ell_3)$.  We just need to know that $\ell_1 \cap \ell_3 \in C_1$.  Since $d(\ell_1) \le d(\ell_2)$ and $\ell_1 \cap \ell_2$ has a positive $x$-coordinate, then $s(\ell_1) \ge s(\ell_2)$.  Similarly, $s(\ell_2) \ge s(\ell_3)$, so $s(\ell_1) \ge s(\ell_3)$.  Therefore, $\ell_1 \cap \ell_3$ has a positive $x$-coordinate.  Since neither $\ell_1$ nor $\ell_3$ intersect $r_1$, their intersection is is $C_1$.
	
	\textbf{Case 2.} The lines $\ell_1, \ell_2$ are of type $2$ and $\ell_3$ is of type $3$.  It suffices to show that $\ell_1 \cap \ell_3 \in C_1$.  This is equivalent to showing that $d(\ell_1) \le d(\ell_3)$.  However, $d(\ell_1) \le d(\ell_2)$ and $d(\ell_2) \le d(\ell_3)$, so we obtain the desired inequality.
	
	\textbf{Case 3.}  The line $\ell_1$ is of type $2$ and the lines $\ell_2, \ell_3$ are of type $3$.  We just need to show that $\ell_1 \cap \ell_3 \in C_1$.  This reduces to showing $d(\ell_1) \le d(\ell_3)$, which can be done as in Case 2.
	
	\textbf{Case 4.}  The lines $\ell_1, \ell_2, \ell_3$ are all of type $3$.  We obtain $d(\ell_1) \le d(\ell_3)$ as case $1$, so we just need to show that $\ell_1 \cap \ell_3 \in C_1$.  This reduces to showing that the intersections $\ell_1 \cap r_1, \ell_3 \cap r_1$ appear in reverse order as $\ell_1 \cap r_2, \ell_3 \cap r_2$.  However, since this happens for the pairs of lines $(\ell_1, \ell_2)$ and $(\ell_2, \ell_3)$, we are done.
	
	\bigskip
	
	The order for $A_2$ can be defined in an analogous way, replacing the roles of $r_1$ and $r_2$.
	
	Let us define the order of $A_3$.  If $\ell$ is a line that intersects $r_1$, we define
	
	\[
	d'(\ell) = \operatorname{dist} (p, \ell \cap r_1)
	\]
	
	We say that two lines $\ell_1, \ell_2$ in $A_3$ are comparable if $\ell_1 \cap \ell_2 \in C_1$.  Moreover, if they are comparable we say that $\ell_1 \le \ell_2$ if either
	\begin{itemize}
		\item The lines $\ell_1, \ell_2$ are both of type $2$ and $d(\ell_1) \le d(\ell_2)$,
		\item The lines $\ell_1, \ell_2$ are both of type $1$ and $d'(\ell_1) \ge d'(\ell_2)$, or
		\item The line $\ell_1$ is of type $1$ and the line $\ell_2$ is of type $2$.
	\end{itemize}
	
	Let us prove transitivity.  If $\ell_1 \le \ell_2$ and $\ell_2 \le \ell_3$, the work above shows that $\ell_1 \le \ell_3$ if the three lines are of the same type.  Therefore, we only need to check two cases, in which the line $\ell_1$ is of type $1$ and the line $\ell_3$ is of type $2$.
	
	\bigskip
	
	\textbf{Case 1.}  The line $\ell_2$ is of type $2$.  Because $\ell_1 \le \ell_2$, we have that $s(\ell_1) \ge s(\ell_2)$.  Since $\ell_2 \cap \ell_3 \in C_1$, we have that $s(\ell_2) \ge s(\ell_3)$.  Therefore, $s(\ell_1) \ge s(\ell_3)$.  This means that $\ell_1 \cap \ell_3 \in C_1$.
	
	\textbf{Case 2.}  The line $\ell_2$ is of type $1$.  This case is analogous to Case 1 if we swap the roles of $r_1$ and $r_2$ and reverse the order.

\end{proof}

\begin{lemma} \label{lemma-xue2}
	Let $A$ be a set of $n$ lines in $\rr^2$ in general position.  Let $(C_1, C_2, C_3)$ be a convex partition of $\rr^2$ into three convex parts.  Then,
	
	\[
	\mu_A (C_1) \mu_A(C_2) \mu_A (C_3) \ge \frac{2n}{3}
	\]
\end{lemma}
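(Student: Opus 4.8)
The plan is to reduce the three-part case to the two-part case handled by Lemmas~\ref{lemma-two-parts} and~\ref{lemma-xue}. A convex partition of $\rr^2$ into three convex parts $(C_1,C_2,C_3)$ has a very restricted combinatorial structure: the pairwise common boundaries $\partial C_i \cap \partial C_j$ are either empty, a ray, a full line, or a single point, and generically the three regions meet at a single common vertex $p$ from which three rays emanate (the degenerate cases, where some boundary is a full line or where two regions do not touch, are easier and can be handled separately or by an approximation argument). So first I would fix such a partition and name the three boundary rays $\rho_{12}, \rho_{23}, \rho_{13}$ emanating from $p$, with $\rho_{ij}$ separating $C_i$ from $C_j$.

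Now I would pick the region whose $\mu_A$-value is smallest, say without loss of generality $\mu_A(C_3) \le \mu_A(C_1), \mu_A(C_2)$. Observe that $C_1 \cup C_2 = C_3^c$ up to boundaries, and that $C_1 \cup C_2$ is exactly the region bounded by the broken line $\rho_{13} \cup \rho_{23}$ (two rays from the common point $p$), so by Lemma~\ref{lemma-xue} applied to the bipartition $(C_1 \cup C_2,\ C_3)$ we get
\[
\mu_A(C_1 \cup C_2)\,\mu_A(C_3) \ge \frac{2n}{3}.
\]
The key remaining point is that $\mu_A(C_1 \cup C_2) \le \mu_A(C_1)$ — indeed, inside the convex cone $C_1 \cup C_2$ the region $C_1$ is itself convex (we chose the labeling so this holds) and... wait, that inequality goes the wrong way. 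Instead the right move is: a largest subfamily $A'$ with $I(A') \subset C_1 \cup C_2$ need not have its incidences in a single $C_i$, so I cannot simply bound $\mu_A(C_1\cup C_2)$ by $\max(\mu_A(C_1),\mu_A(C_2))$. The cleaner route is to apply Lemma~\ref{lemma-two-parts} or Lemma~\ref{lemma-xue} to the \emph{line or broken line} $\rho_{13}\cup\rho_{23}$ that separates $C_1\cup C_2$ from $C_3$, giving the displayed inequality, and then separately subdivide $C_1 \cup C_2$: the boundary ray $\rho_{12}$, extended if necessary to a full line or kept as a ray inside the cone $C_1\cup C_2$, splits $C_1\cup C_2$ into $C_1$ and $C_2$, and we want $\mu_A(C_1)\mu_A(C_2) \ge \tfrac{2}{3}\mu_A(C_1\cup C_2)$.

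So the actual structure is a two-step argument: let $m = \mu_A(C_1 \cup C_2)$ and let $A' \subset A$ be a witnessing family, $|A'| = m$, $I(A') \subset C_1 \cup C_2$. Restricting attention to $A'$, which is still in general position, the region $C_1 \cup C_2$ (a convex cone) is split by the single ray $\rho_{12}$ into the two convex sets $C_1$ and $C_2$; this is a bipartition of the plane by a half-line, which is a special case of Lemma~\ref{lemma-xue} (take one of the two rays to be degenerate, or rather: $\rho_{12}$ together with its opposite ray forms a line, and one half-plane contains $C_1\cap\conv I(A')$ while the other contains $C_2 \cap \conv I(A')$ — since all incidences of $A'$ lie in the cone, only the part of the line inside the cone matters). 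Applying Lemma~\ref{lemma-two-parts} to $A'$ with this line gives $\mu_{A'}(C_1)\,\mu_{A'}(C_2) \ge m$, hence $\mu_A(C_1)\,\mu_A(C_2) \ge m \ge \mu_A(C_1\cup C_2)$. Combining with the first inequality,
\[
\mu_A(C_1)\,\mu_A(C_2)\,\mu_A(C_3) \ge \mu_A(C_1 \cup C_2)\,\mu_A(C_3) \ge \frac{2n}{3},
\]
which is what we want. The main obstacle I anticipate is the careful bookkeeping of the degenerate configurations of a three-convex-partition of the plane (when boundaries are full lines, when the "central vertex" does not exist, or when the chosen ray $\rho_{12}$ must be treated as part of a broken line rather than a line) and verifying in each case that the relevant separator is covered by Lemma~\ref{lemma-two-parts} or Lemma~\ref{lemma-xue}; the approximation argument to move incidences and parallels off the boundary rays should be routine, as in the earlier lemmas.
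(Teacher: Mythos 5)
Your proposal is correct and follows essentially the same route as the paper: first apply Lemma~\ref{lemma-xue} to the bipartition into one convex part and the union of the other two, then restrict to a witnessing subfamily $A'$ and apply Lemma~\ref{lemma-two-parts} with the line containing the ray separating the remaining two parts (the paper likewise dispatches the degenerate two-parallel-lines case by two applications of Lemma~\ref{lemma-two-parts}). The only stray element is the initial ``pick the region with smallest $\mu_A$'' step, which your final argument never uses.
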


\begin{proof}
	If $(C_1, C_2, C_3)$ is formed by two parallel lines, we can apply Lemma \ref{lemma-two-parts} twice to finish.  If not, then $(C_1, C_2, C_3)$ is formed by three rays coming out of the same point.  Consider the partition $(C_1, C_2 \cup C_3)$.  By Lemma \ref{lemma-xue}, we know that
	\[
	\mu_A (C_1) \mu_A (C_2 \cup C_3) \ge \frac{2n}{3}.
	\]
	
	Let $A' \subset A$ be the set of lines that realizes $\mu_A(C_2 \cup C_3)$.  Let $\ell$ be the line that contains the ray spitting $C_2$ and $C_3$.  We can apply Lemma \ref{lemma-two-parts} to $A'$ and $\ell$ to obtain
	\[
	\mu_A(C_2) \mu_A(C_3) \ge \mu_{A'}(C_2) \mu_{A'} (C_3) \ge |A'| = \mu_{A}(C_2 \cup C_3),
	\]
	which concludes the proof.
\end{proof}

\section{Equitable cuttings.}\label{section-equitable-cuttings}


The gist of the proof for Theorem \ref{theorem-main} is to show that for each value of $r$ and two finite sets of lines $A, B$, each in general position, we always have at least one of the two following situations.

\begin{itemize}
	\item There exists two positive integers $r_1, r_2$ such that $r_1 + r_2 = r$ and there is a convex partition of the plane into two parts $(C_1, C_2)$ such that
	\[
	\mu_A (C_i) \ge \left(\frac{2|A|}{3}\right)^{r_i/r}-2, \quad \mu_B (C_i) \ge \left(\frac{2|B|}{3}\right)^{r_i/r}-2 \qquad \mbox{ for } i=1,2
	\]
	\item There exists three positive integers $r_1, r_2, r_3$ such that $r_1 + r_2 + r_3 = r$ and there is a convex partition of the plane into three parts $(C_1, C_2, C_3)$ such that
	\[
	\mu_A (C_i) \ge \left(\frac{2|A|}{3}\right)^{r_i/r}-2, \quad \mu_B (C_i) \ge \left(\frac{2|B|}{3}\right)^{r_i/r}-2 \qquad \mbox{ for } i=1,2,3
	\]
\end{itemize}

The constant $2/3$ factors are the reason why we have the $r^{\ln(2/3)}$ factor in the main theorem.  We will call the first type of partition an \textbf{equitable $(r_1,r_2)$ cut}, and the second type an \textbf{equitable $(r_1, r_2, r_3)$ cut}.

The rest of the paper will focus on proving the following lemma.

\begin{lemma}\label{lemma-strong-equitable-cuts}
	Let $A, B$ be two finite sets of points in the plane, each in general position, and $r\ge 2$ be a positive integer.  Then, there either exists a pair $(r_1, r_2)$ of positive integers with sum $r$ for which there is an equitable $(r_1, r_2)$ cut, or there exists a triple $(r_1, r_2, r_3)$ of positive integers with sum $r$ for which there is an equitable $(r_1, r_2, r_3)$ cut.
\end{lemma}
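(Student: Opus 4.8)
The plan is to produce the desired cut by a topological argument over a family of convex partitions of $\rr^2$ into at most three parts, using the product inequalities of Lemmas~\ref{lemma-two-parts}, \ref{lemma-xue} and \ref{lemma-xue2} to turn a statement about \emph{balance} of $\mu_A,\mu_B$ into one about \emph{equitability}. Fix a large triangle $T$ with vertices $v_1,v_2,v_3$ whose interior contains $\conv(I(A)\cup I(B))$. For a point $q\in T$, the three rays from $q$ through $v_1,v_2,v_3$ determine a convex partition: if $q\in\inter T$ this is a genuine $3$-fan with parts $C_1(q),C_2(q),C_3(q)$ (with $C_i(q)$ the region not meeting the ray $qv_i$); if $q$ lies on the edge $v_iv_j$ two rays become collinear; and if $q=v_i$ the ray $qv_i$ degenerates and we are left with a $2$-fan from $v_i$. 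Thus the vertices of $T$ carry $2$-part partitions and the interior carries $3$-part partitions, matching the dichotomy in the statement, and the parameter space is a model of a triangle on which the Knaster--Kuratowski--Mazurkiewicz theorem applies. The content of Sections~\ref{section-equitable-cuttings} and \ref{section-region-construction} will be to make this family (the ``equitable cuttings'') precise, including the assignment, as $q$ varies, of an integer budget $(r_1,r_2,r_3)$ with $r_1+r_2+r_3=r$ to the three parts, with parts whose budget rounds to $0$ being absorbed into a neighbor so that the boundary of $T$ genuinely carries $(r_1,r_2)$ cuts.

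The tool for controlling a single part is monotonicity of $\mu_A$: since $C\subseteq C'$ implies $\mu_A(C)\le\mu_A(C')$, along any one-parameter sweep in which one part grows (sliding one ray) the value of $\mu_A$ on that part is nondecreasing while it is nonincreasing on the shrinking part. Hence, for a single set and a prescribed exponent $\rho$, one can stop the sweep at the first position where the growing part satisfies $\mu_A(C)\ge (2|A|/3)^{\rho}-2$, and Lemmas~\ref{lemma-two-parts}--\ref{lemma-xue2} then force the complementary part(s) to carry $(2|A|/3)^{1-\rho}$-many lines, up to the additive slack that is exactly the $-2$ in the definition of an equitable cut. The remaining point is to do this for $A$ and $B$ \emph{simultaneously} with one partition and one budget split. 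For this I would, for each $q$ in the parameter triangle, define closed sets $F_1,F_2,F_3$ where $F_i$ is the set of parameters at which part $C_i$ fails its requirement, i.e. $\mu_A(C_i(q))<(2|A|/3)^{r_i/r}-2$ or $\mu_B(C_i(q))<(2|B|/3)^{r_i/r}-2$; one then checks the KKM covering conditions (a vertex $v_i$ lies in $F_i$ because there $C_i$ is degenerate, and the face opposite $v_i$ is covered by the other two $F$'s because on it $C_i$ carries no budget or is forced to be large), so KKM produces $q^{*}\in F_1\cap F_2\cap F_3$. But a common point would mean \emph{every} part fails, and multiplying the three failed inequalities contradicts the product bound of Lemma~\ref{lemma-xue2} (or Lemma~\ref{lemma-xue} after degeneration) — the only way out being that at $q^{*}$ the partition has already collapsed to two parts, which then form the desired equitable $(r_1,r_2)$ cut.

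The main obstacle is that $\mu_A$ and $\mu_B$ are integer-valued and jump along these sweeps, so none of the underlying functions are continuous; the sets $F_i$ must be defined so as to stay closed and so that the KKM boundary conditions survive the jumps, and handling the jump at a ``disputed'' location — where the position forcing the correct value for $A$ differs from that for $B$ — is precisely what forces us to bend a line into a $2$- or $3$-fan rather than keep a single line, and to pay the $2/3$ factors and the $2$ units of slack. Finally, iterating this lemma along a recursion tree of depth $O(\log r)$ (each equitable $(r_1,\dots)$ cut subdivides the budget multiplicatively) accumulates the factors $(2/3)^{r_i/r}$ into $r^{\ln(2/3)}$ and the additive losses into $2r$, yielding Theorem~\ref{theorem-main}.
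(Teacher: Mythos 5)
There is a genuine gap at the core of your KKM set-up. You define $F_i$ as the set of parameters where part $C_i$ fails its requirement \emph{for $A$ or for $B$}, and you claim that a triple point $q^{*}\in F_1\cap F_2\cap F_3$ contradicts the product bound of Lemma \ref{lemma-xue2}. It does not: the failures can be mixed (say $C_1$ fails for $A$, $C_2$ fails for $B$, $C_3$ fails for $A$), and Lemmas \ref{lemma-two-parts}--\ref{lemma-xue2} each control only a single family of lines, so no product inequality is violated. This is exactly the difficulty the paper's proof is built around: it first chooses the fan angles $\alpha_1,\alpha_2$ as functions of the apex $p$ so that the partition is \emph{automatically} (almost) equitable for $A$ --- $\mu_A(C_1)=M_1$, $\mu_A(C_2)=M_2$, hence $\mu_A(C_3)\ge M_3$ by the product lemma --- and only then colors the parameter region by which part satisfies the $B$-requirement. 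With that normalization at most one family can fail per part, at least one color is always present (Lemma \ref{lemma-xue2} applied to $B$ alone), and a KKM triple point is the \emph{desired} cut rather than a contradiction. Your version also has the logic inverted: if the covering condition $T=F_1\cup F_2\cup F_3$ held everywhere, no parameter would give a successful 3-cut, and nothing forces the triple point produced by KKM to sit on the boundary of $T$ where the partition degenerates to two parts.

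A second missing ingredient is the choice of the budget $(r_1,r_2)$ or $(r_1,r_2,r_3)$ and the verification of the KKM boundary conditions. The paper assigns a sign to each integer in $\{1,\dots,r-1\}$ according to whether every $r_1$-critical half-plane for $A$ is over- or under-supplied with lines of $B$, shows that the absence of an equitable $(r_1,r_2)$ cut forces $r_1$ and $r_2$ to have opposite signs, and then invokes Theorem \ref{theorem-summands} to extract a monochromatic triple; the common sign of $r_1,r_2,r_3$ is precisely what guarantees that the left, right, and top boundaries of the parameter region carry colors $1$, $2$, and $3$ respectively. Your assertion that on the face opposite $v_i$ the part $C_i$ ``carries no budget or is forced to be large'' has no justification for both $A$ and $B$ simultaneously without this sign argument. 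Finally, you correctly flag the discontinuity of $\mu_A,\mu_B$ as an obstacle, but resolving it (the interpolation of the angles near points of $I(A)$ and near the critical lines $x=x_0$, $x=x_1$, which is the source of the $+1$ slack and the $-2r$ loss) is a substantial part of the proof, not a detail.
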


Let us first show that Lemma \ref{lemma-strong-equitable-cuts} implies Theorem \ref{theorem-main}.

\begin{proof}[Proof that Lemma \ref{lemma-strong-equitable-cuts} implies Theorem \ref{theorem-main}]
	First, notice that $r^{\ln (2/3)} = \left( \frac23 \right)^{\ln r}$.  We prove Theorem \ref{theorem-main} by strong induction on $r$.  When $r=1$, the result is clear.  If $r \ge 2$, suppose that there is a pair $(r_1, r_2)$ with sum $r$ for which there is an equitable $(r_1, r_2)$ cut.  The case for a triple will be analogous.  Denote by $(C_1, C_2)$ the two sets of the partition.
	
	We know that $\mu_A (C_i) \ge \left( \frac{2|A|}{3}\right)^{r_i/r}-2$.  Let $A_i \subset A$ of cardinality $\mu_A(C_i)$ be enclosed by $C_i$.  We apply Theorem \ref{theorem-main} to $A_i$ to find a convex partition $(C^{i}_1, \ldots, C^{i}_{r_i})$ of the plane into $r_i$ parts.  We know that
	\begin{align*}
		\mu_A(C^{i}_j \cap C_i) & \ge \mu_{A_i}(C'_j \cap C_i) \ge \left( \frac{2}{3}\right)^{\ln (r_i)}|A_i|^{1/r_i}-2r_i \ge \left( \frac23\right)^{\ln (r_i)} \left( \left[\frac{2|A|}{3}\right]^{r_i/r}-2\right)^{1/r_i} - 2r_i \\
		& \ge \left( \frac23\right)^{\ln (r_i)} \left( \left[\frac{2|A|}{3}\right]^{r_i/r}\right)^{1/r_i} - 2(r_i +1) \ge \left( \frac23\right)^{\ln (r-1)+1/r} {|A|}^{1/r} - 2r\\
		 & \ge \left( \frac23\right)^{\ln r}|A|^{1/r}-2r,
	\end{align*}
	where the last inequality follows from the mean value theorem.  Equivalent arguments work for $B$.  Therefore, the partition formed by the sets $C^i_j \cap C_i$ for $i=1,2$, $j =1,\ldots, r_i$ is the one we are looking for.
\end{proof}

\begin{definition}
	Let $r_1$ be an integer with $1 \le r_1 \le r-1$.  We say that a closed half-plane $H$ is $r_1$-critical for $A$ if
	\begin{itemize}
		\item $H$ encloses a subset of $A$ of cardinality at least $\left(\frac{2|A|}{3}\right)^{r_1/r}-2$ and
		\item The interior of $H$ does not enclose a subset of $A$ of cardinality at least at least $\left(\frac{2|A|}{3}\right)^{r_1/r}-2$.
	\end{itemize}
\end{definition}

Notice that the boundary line of an $r_1$-critical halfspace is a support line of a set of the form $\conv(I(A'))$, where $A' \subset A$ and $|A'| = \left\lceil \left(\frac{2|A|}{3}\right)^{r_1/r} \right\rceil-2$, and its interior contains no other such set.

\begin{lemma}
	Let $A$ be a finite family of lines in $\rr^2$ in general position and $r_1$, $r$ be two positive integers with $1 \le r_1 \le r-1$.  Suppose that a line $\ell$ induces a convex partition into two closed half-spaces $(C_1,C_2)$.  If $C_1$ is $r_1$-critical, then
	\[
	\mu_A(C_2) > \left( \frac{3}{2}\right)^{(r_1/r)}|A|^{1-(r_1/r)}
	\]
\end{lemma}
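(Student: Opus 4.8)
The plan is to bound $\mu_A(C_1)$ from above using that $C_1$ is $r_1$-critical, and then feed that bound into Lemma~\ref{lemma-two-parts}, which gives $\mu_A(C_1)\mu_A(C_2)\ge |A|$. The definition of $r_1$-critical only controls the open half-plane, namely $\mu_A(\inter C_1) < \left(\frac{2|A|}{3}\right)^{r_1/r}-2$, so the crux is the elementary-looking claim that, for any line $\ell$ bounding a closed half-plane $C_1$,
\[
\mu_A(C_1) \le \mu_A(\inter C_1) + 2.
\]
Granting this, $\mu_A(C_1) < \left(\frac{2|A|}{3}\right)^{r_1/r}$, and Lemma~\ref{lemma-two-parts} gives $\mu_A(C_2) \ge |A|/\mu_A(C_1) > |A|\big/\left(\frac{2|A|}{3}\right)^{r_1/r}$, which after the one-line computation $|A|\cdot\left(\frac{2|A|}{3}\right)^{-r_1/r} = \left(\frac32\right)^{r_1/r}|A|^{1-r_1/r}$ is exactly the assertion of the lemma (note that both inequalities are strict, matching the strict inequality in the conclusion).

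To prove the claim I would take $A' \subseteq A$ realizing $\mu_A(C_1)$, so $I(A')\subseteq C_1$, and produce $A''\subseteq A'$ with $|A''|\ge |A'|-2$ and $I(A'')\cap \ell=\emptyset$ (hence $I(A'')\subseteq \inter C_1$). The key observation is that, since no three lines of $A$ are concurrent, any point of $\ell$ lying in $I(A')$ is the intersection of exactly two lines of $A'$, and it cannot be the intersection of two lines both different from $\ell$ (those two lines together with $\ell$ would be concurrent). So: if $\ell\in A'$, then deleting $\ell$ already removes every point of $I(A')$ on $\ell$, costing one line; if $\ell\in A\setminus A'$, then $I(A')\cap\ell=\emptyset$ already and nothing is deleted; and if $\ell\notin A$, then three or more collinear points of $I(A)$ on $\ell$ would have to lie on a common line of $A$, which could only be $\ell$ — impossible — so $|I(A')\cap\ell|\le 2$, say $\{p,q\}$ with $p=a\cap b$, $q=c\cap d$, where $\{a,b\}\cap\{c,d\}=\emptyset$ (a line through both $p$ and $q$ would be $\ell\notin A$), and deleting one line from each pair kills both points at a cost of at most two lines. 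In all cases the claim follows.

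I expect the only genuine obstacle to be the collinearity/concurrency bookkeeping in this claim: in particular making explicit that it is precisely the third bullet of the general-position hypothesis that forces $|I(A')\cap\ell|\le 2$ when $\ell\notin A$, and double-checking that the ``$+2$'' cannot be improved to something needed here but is exactly what the ``$-2$'' in the definition of $r_1$-critical absorbs. Everything downstream — the application of Lemma~\ref{lemma-two-parts} and the algebraic rearrangement — is routine.
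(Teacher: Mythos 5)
Your argument is correct, and it reaches the conclusion through the same skeleton as the paper (criticality bounds one side, Lemma~\ref{lemma-two-parts} converts that into a lower bound on the other side, then the one-line algebraic rearrangement), but the key step is handled differently. The paper never proves your claim $\mu_A(C_1)\le \mu_A(\inter C_1)+2$; instead it perturbs geometrically: it replaces $C_1$ by a slightly smaller closed half-plane $C_1'$ whose intersection with $I(A)$ equals $I(A)\cap \inter C_1$, notes $\mu_A(C_1')<\left(\frac{2|A|}{3}\right)^{r_1/r}$ directly from the definition of $r_1$-critical, applies Lemma~\ref{lemma-two-parts} to $(C_1',C_2')$, and uses $\mu_A(C_2)=\mu_A(C_2')$ (which requires choosing the strip between $\ell$ and $\partial C_1'$ free of incidence points). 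Your route instead keeps the original partition and pays for the boundary combinatorially: the deletion argument, which correctly invokes the no-three-concurrent condition when $\ell\in A$ and the third bullet of general position when $\ell\notin A$, shows that passing from the closed half-plane to its interior costs at most two lines. What your version buys is an explicit explanation of where the ``$-2$'' in the definition of $r_1$-critical comes from and why it is exactly enough; what the paper's version buys is brevity and no case analysis, at the cost of an unstated genericity choice for $C_1'$. Both are valid; the only edge case neither addresses is the degenerate situation where the thresholds are nonpositive, which is harmless.
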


\begin{proof}
	Let $(C'_1, C'_2)$ be a convex partition of the plane such that $C'_1 \subset C_1$ and $C'_1$ contains the same points of $I(A)$ as the interior of $C_1$.  Therefore, $\mu_A (C'_1) < \left(\frac{2|A|}{3}\right)^{r_1/r}$ and $\mu_A(C_2) = \mu_A(C'_2)$.  However, by Lemma \ref{lemma-two-parts}, $\mu_A(C'_1)\mu_A(C'_2) \ge |A|$, so we get the desired conclusion. 
\end{proof}

\begin{lemma}\label{lemma-twocutting}
	Let $A, B$ be two finite sets of lines in the plane, each in general position, and $r\ge 2$ be a positive integer.  Let $1 \le r_1 \le r-1$ be an integer.  If there are two $r_1$-critical half-spaces $H$ and $H'$ for $A$, such that $\mu_B(H) \ge \left(\frac{2|B|}{3}\right)^{r_1/r} -2$ and $\mu_B (H') \le \left(\frac{2|B|}{3}\right)^{r_1/r}-2$, then there exists a closed half-space $H''$ which is $r_1$-critical for $A$ and such that $\mu_B(H'')= \left\lceil \left(\frac{2|B|}{3}\right)^{r_1/r} \right\rceil -2$.
	\end{lemma}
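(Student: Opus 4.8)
The plan is to parametrize all $r_1$-critical half-spaces for $A$ by a circle of directions and then run a discrete intermediate value argument for $\mu_B$ along it. Write $t_A=\left(\frac{2|A|}{3}\right)^{r_1/r}$, $t_B=\left(\frac{2|B|}{3}\right)^{r_1/r}$, $k=\lceil t_A\rceil-2$, $m=\lceil t_B\rceil-2$; we may assume $k\ge 2$, since otherwise the open half-plane of every half-space already encloses a subset of $A$ of size $\ge t_A-2$ (a single line, or even the empty set), so no half-space is $r_1$-critical and there is nothing to prove. For a unit vector $u_\theta=(\cos\theta,\sin\theta)$ set
\[
c(\theta)=\min\Bigl\{\,\max_{x\in I(A')}\langle x,u_\theta\rangle\;:\;A'\subseteq A,\ |A'|=k\,\Bigr\},\qquad H_\theta=\{x\in\rr^2:\langle x,u_\theta\rangle\le c(\theta)\}.
\]
Being a minimum of finitely many continuous functions of $\theta$, $c(\theta)$ is continuous; moreover it is piecewise equal to $\langle p,u_\theta\rangle$ for a point $p\in I(A)$ that is locally constant off a finite subset of $S^1$, so on each of finitely many arcs the bounding line $\ell_\theta:=\partial H_\theta$ is the line through a fixed point of $I(A)$, rotating with $\theta$.

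I would first check that each $H_\theta$ is $r_1$-critical for $A$ and that, conversely, every $r_1$-critical half-space $H$ for $A$ equals $H_\theta$ for the unique $\theta$ with $u_\theta$ the outer normal of $H$. For the first claim: a minimizing $A'$ has $I(A')\subseteq H_\theta$, so $\mu_A(H_\theta)\ge k\ge t_A-2$; and if some $A''$ with $|A''|=k$ had $I(A'')\subseteq\inter H_\theta$ then $\max_{x\in I(A'')}\langle x,u_\theta\rangle<c(\theta)$, contradicting minimality, so $\mu_A(\inter H_\theta)\le k-1<t_A-2$ (the last step since $k=\lceil t_A-2\rceil$). For the converse, fix the outer normal $u_\theta$: a closed half-space $\{x:\langle x,u_\theta\rangle\le c\}$ encloses $\ge t_A-2$ lines of $A$ exactly when $c\ge c(\theta)$, while its interior does so exactly when $c>c(\theta)$, so $c=c(\theta)$ is the only value for which such a half-space is $r_1$-critical. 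Hence $(H_\theta)_{\theta\in S^1}$ meets every $r_1$-critical half-space for $A$; in particular $H=H_{\theta_H}$ and $H'=H_{\theta_{H'}}$ for some directions $\theta_H,\theta_{H'}$.

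The heart of the argument is that $\theta\mapsto\mu_B(H_\theta)$ is piecewise constant with finitely many pieces and changes by at most $1$ at each break. It can change only at a $\theta_0$ for which $\ell_{\theta_0}$ passes through a point of $I(B)$; since on each arc $\ell_\theta$ rotates about a point of $I(A)$ (which is not a point of $I(B)$, by general position), every point of $I(B)$ lies on $\ell_\theta$ for only finitely many $\theta$, so there are finitely many breaks. For the jump size, criticality of $H_{\theta_0}$ forces $\ell_{\theta_0}$ to contain some $p\in I(A)$, since otherwise $H_{\theta_0}$ and $\inter H_{\theta_0}$ would enclose the same subsets of $A$. As $p$ is the intersection of two lines of $A$ and no three lines of $A\cup B$ are concurrent, $\ell_{\theta_0}$ is not a line of $B$; if it is a line of $A$ it carries no point of $I(B)$; otherwise $\ell_{\theta_0}\notin A\cup B$, so by the collinearity clause of general position it carries at most two points of $I(A\cup B)$, one being $p$, hence at most one of $I(B)$. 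In every case $\ell_{\theta_0}$ meets $I(B)$ in at most one point $q=b\cap b'$, and crossing $\theta_0$ only toggles whether $q$ lies in the half-space. Viewing $\mu_B(H_\theta)$ as the clique number of the graph on $B$ whose edges are the pairs of lines meeting inside $H_\theta$, crossing $\theta_0$ inserts or deletes the single edge $bb'$, which changes a graph's clique number by at most $1$.

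Finally, $\mu_B(H)$ is an integer that is $\ge t_B-2$, hence $\ge m$, while $\mu_B(H')\le t_B-2\le m$. Traversing an arc of $S^1$ from $\theta_H$ to $\theta_{H'}$, the function $\theta\mapsto\mu_B(H_\theta)$ starts $\ge m$, ends $\le m$, is piecewise constant, and jumps by at most $1$, so it takes the value $m$ on some sub-arc; any $H_{\theta''}$ with $\theta''$ in the interior of that sub-arc is $r_1$-critical for $A$ and satisfies $\mu_B(H_{\theta''})=m$, which is the required $H''$. (If $\theta_H=\theta_{H'}$ then $H=H'$, forcing $m\le\mu_B(H)\le m$, so $H''=H$.) I expect the main obstacle to be the jump-size bound of the third paragraph: it requires combining the geometric fact that the bounding line of an $r_1$-critical half-space always touches $I(A)$ — so, by general position of $A\cup B$, it cannot carry two points of $I(B)$ — with the elementary observation that adding a single edge changes a graph's clique number by at most one. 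The parametrization, the continuity of $c(\theta)$, and the final intermediate value step are then routine.
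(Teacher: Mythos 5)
Your proof is correct and follows essentially the same route as the paper's: parametrize the $r_1$-critical half-planes by their outer normal direction as a minimum of finitely many support functions, note that the boundary line varies continuously, and run a discrete intermediate value argument on $\mu_B$ along the resulting family. The only difference is one of detail --- you make explicit the jump-size bound that the paper dismisses with ``as $B$ is in general position'' (your version in fact invokes general position of $A\cup B$, which is what the application supplies).
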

	
	\begin{proof}
		Notice that for every direction there is a unique oriented line that defines a $r_1$-critical half-plane for $A$ on the left side of the line.  Moreover, as the direction changes, this line changes continuously, since it is defined as a minimum of several support functions of convex sets.  Therefore, we can go from $H$ to $H'$ by a continuous change of the boundary line, while always maintaining an $r_1$-critical half-plane for $A$.  The value of $\mu_B(\cdot)$ on this half-plane can only change by increments or decrements of one, as $B$ is in general position.  Therefore, at some point $\mu_B (\cdot)$ has the required value.
	\end{proof}
	
	If the conditions of the Lemma \ref{lemma-twocutting} are satisfied, define $r_2 = r - r_1$.  Then, we have a partition of $\rr^2$ into two closed convex parts $(C_1, C_2)$ such that
	\begin{align*}
		 \mu_A (C_1) \ge \left(\frac{2|A|}{3}\right)^{r_1/r} -2, \qquad &   \mu_A (C_2) \ge \left( \frac{3}{2}\right)^{(r_1/r)}|A|^{1-(r_1/r)} \ge \left(\frac{2|A|}{3}\right)^{r_2/r}-2 \\ \mu_B (C_1) \ge \left(\frac{2|B|}{3}\right)^{r_1/r} -2
		, \qquad &  \mu_B (C_2) \ge \left( \frac{3}{2}\right)^{(r_1/r)}|B|^{1-(r_1/r)} \ge \left(\frac{2|B|}{3}\right)^{r_2/r}-2
	\end{align*}
	
	This means we have an equitable $(r_1,r_2)$ cut.  Therefore, if $A, B$ are two finite sets of lines in the plane, each in general position, such that there is no equitable $(r_1, r_2)$ cut, we either have that
	\begin{itemize}
		\item Every $r_1$-critical half-plane $H$ for $A$ satisfies $\mu_B(H) > \left(\frac{2|B|}{3}\right)^{r_1/r}-2$, or
		\item Every $r_1$-critical half-plane $H$ for $A$ satisfies $\mu_B(H) < \left(\frac{2|B|}{3}\right)^{r_1/r}-2$.
	\end{itemize}
	
	We can assign a sign to $r_1$ depending on which scenario above holds true.  We will say that $r_1$ is positive for $A$ if the first one happens, and negative for $A$ otherwise.  Notice that $r_1$ is positive for $A$ if and only if it is negative for $B$.
	
	Now we can use the following theorem.
	
	\begin{theorem}[Theorem 9 in \cite{Bespamyatnikh:2000tn}]\label{theorem-summands}
		If every element of $1,2,\ldots, r-1$ is given a sign, there is either a pair $(r_1, r_2)$ or a triple $(r_1,r_2,r_3)$ with sum $r$ of the same sign.  Moreover, we can further assume that $r_i \le 2r/3$ for all $i$.
	\end{theorem}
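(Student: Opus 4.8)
The plan is to treat this as a purely combinatorial statement about a $2$-colouring $\sigma\colon\{1,\dots,r-1\}\to\{+,-\}$, and to prove it by a finite case analysis (no induction). I will call a pair or triple of positive integers summing to $r$ \emph{admissible} if each of its parts is at most $2r/3$, and \emph{monochromatic} if all its parts receive the same sign; the task is to produce an admissible monochromatic pair or triple.

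First I would dispose of two easy cases: if $r$ is even, the pair $(r/2,r/2)$ is admissible and (trivially) monochromatic; if $3\mid r$, the triple $(r/3,r/3,r/3)$ works for the same reason. So from now on $r\ge 5$ is odd with $3\nmid r$. Next I record which admissible configurations I intend to use. The admissible \emph{pairs} are exactly the sets $\{a,r-a\}$ with $a$ in the middle band $B=\{\lceil r/3\rceil,\dots,\lfloor 2r/3\rfloor\}$, and on $B$ the involution $a\mapsto r-a$ has no fixed point. Among admissible \emph{triples} I will use the balanced ones $(a,a,r-2a)$ with $a\in\{\lceil r/6\rceil,\dots,(r-1)/2\}$ and the ``one-small'' ones $(x,a,b)$ with $x$ small, $a+b=r-x$ and $a,b\le 2r/3$; and I note that, since $3\nmid r$, the integer $r$ is a sum of three integers each equal to $\lfloor r/3\rfloor$ or $\lceil r/3\rceil$, so if $\sigma$ were constant on $\{1,\dots,\lceil r/3\rceil\}$ we would already have an admissible monochromatic triple.

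The heart of the argument is then a proof by contradiction. Assume no admissible monochromatic pair or triple exists. From the pair case, $\sigma$ is ``antisymmetric'' on $B$, i.e.\ $\sigma(a)\ne\sigma(r-a)$ for all $a\in B$; from the remark above, $\sigma$ takes both values on $\{1,\dots,\lceil r/3\rceil\}$. Normalising (swap the two signs if needed) so that $\sigma(\lceil r/3\rceil)=+$, I would then chase forced signs: each balanced triple $(a,a,r-2a)$ and each band pair $\{a,r-a\}$ that fails to be monochromatic pins down the sign of a previously unconstrained index, and feeding these equalities and inequalities back and forth with the antisymmetry on $B$ forces more and more indices. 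The claim to extract is that the set of indices forced to carry the sign $+$ must eventually either fill the initial segment $\{1,\dots,\lceil r/3\rceil\}$ or contain some $\{x,a,b\}$ with $x+a+b=r$ and $a,b\le 2r/3$ --- either outcome contradicting the assumption. A handful of small values of $r$ (it suffices to inspect, say, $r\le 8$ by hand) fall outside this generic chase and are checked directly.

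I expect the main obstacle to be exactly this chasing step: making the bookkeeping uniform in $r$ and, above all, ensuring that every pair or triple invoked along the way is genuinely admissible, since it is precisely the restriction $r_i\le 2r/3$ --- not the bare existence of a monochromatic pair or triple --- that carries the content of the theorem. A convenient way to organise it should be to track simultaneously the longest run of a constant sign starting from $1$ and the sign pattern on the upper half of $B$, and to show that the run is forced to reach length $\lceil r/3\rceil$, at which point a balanced monochromatic triple appears.
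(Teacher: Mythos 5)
The paper does not actually prove this statement: it is imported verbatim as Theorem~9 of \cite{Bespamyatnikh:2000tn}, so there is no in-paper proof to compare yours against. Judged on its own, your write-up is a proof \emph{plan} rather than a proof, and the plan stops exactly where the theorem's content begins. The preliminary reductions are fine and correct: $(r/2,r/2)$ for $r$ even, $(r/3,r/3,r/3)$ for $3\mid r$, the identification of the admissible pairs as $\{a,r-a\}$ with $\lceil r/3\rceil\le a\le\lfloor 2r/3\rfloor$, and the resulting ``antisymmetry'' $\sigma(a)\neq\sigma(r-a)$ on that band under the assumption that no admissible monochromatic pair exists. But the entire remaining case ($r$ odd, $3\nmid r$) is handled by the sentence ``I would then chase forced signs \dots the claim to extract is that \dots'' followed by your own admission that ``the main obstacle'' is precisely this chase. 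That chase \emph{is} the theorem. Nothing in the proposal specifies which triples are invoked in which order, verifies that each one satisfies the $r_i\le 2r/3$ constraint, or shows that the propagation terminates in a contradiction uniformly in $r$; nor can the claim that ``$r\le 8$ by hand suffices'' be assessed, since the generic argument it is supposed to complement is absent.

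To see that real work remains, run the chase for $r=3m+1$: the triple $(m,m,m+1)$ forces $\sigma(m)\neq\sigma(m+1)$; the triple $(m-1,m+1,m+1)$ forces $\sigma(m-1)=\sigma(m)$; the triple $(m-1,m,m+2)$ then forces $\sigma(m+2)=\sigma(m+1)$; the triple $(m-2,m+1,m+2)$ forces $\sigma(m-2)=\sigma(m)$; and so on. One must formulate and prove the correct invariant (roughly, $\sigma(m-j)=\sigma(m)$ and $\sigma(m+1+j)=\sigma(m+1)$ for all relevant $j$), check at every step that the triple used is admissible (e.g.\ $m+2\le 2r/3$ requires $m\ge 2$, which is why small $r$ need separate treatment), handle $r\equiv 2\pmod 3$ analogously, and exhibit the terminal contradiction when the propagation reaches the end of the range. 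None of this is unprovable --- the statement is true and the induction closes --- but as submitted the argument has a genuine gap: the central combinatorial step is asserted, not established.
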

	
	\begin{lemma}
		Let $A, B$ be two finite sets of lines in the plane, each in general position.  Let $r_1, r_2, r$ be positive integers such that $r_1 + r_2 = r$.  If there is no equitable $(r_1, r_2)$ cut, then the signs given to $r_1$ and $r_2$ are different.
	\end{lemma}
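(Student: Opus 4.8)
The strategy is proof by contradiction. Under the standing hypothesis there is no equitable $(r_1,r_2)$ cut, so (as explained just before the lemma) each of $r_1$ and $r_2$ is either positive or negative for $A$; suppose for contradiction they carry the \emph{same} sign. An equitable $(r_1,r_2)$ cut is symmetric in $A$ and $B$, and ``$r_i$ is positive for $A$'' means the same as ``$r_i$ is negative for $B$'', so after possibly interchanging the names of the two families we may assume that both $r_1$ and $r_2$ are positive for $A$. I will build an equitable $(r_1,r_2)$ cut from this, which is the contradiction. Put $k_i=\lceil (2|A|/3)^{r_i/r}\rceil-2$, and recall from the remark after the definition of an $r_i$-critical half-plane that, for any outer-normal direction $u$, there is an $r_i$-critical half-plane $H_{u,t_i(u)}=\{x:\langle x,u\rangle\le t_i(u)\}$ with $t_i(u)=\min\{\,h_{\conv(I(A'))}(u): A'\subseteq A,\ |A'|=k_i\,\}$; we work in the regime where these half-planes exist (the small-$|A|$ cases making Theorem~\ref{theorem-main} vacuous are handled separately).

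The heart of the argument is a single geometric observation. Fix any $r_2$-critical half-plane $H$ for $A$; write $H^{c}$ for the complementary closed half-plane. The (unnamed) lemma immediately preceding Lemma~\ref{lemma-twocutting}, applied with $r_1$ replaced by $r_2$ and with $H,H^{c}$ in place of $C_1,C_2$, gives $\mu_A(H^{c})>(3/2)^{r_2/r}|A|^{1-r_2/r}\ge |A|^{r_1/r}> (2|A|/3)^{r_1/r}\ge k_1$, using only $(3/2)^{x}\ge 1$ and $(2/3)^{x}<1$ for $x\in(0,1)$. Hence some $A^{*}\subseteq A$ with $|A^{*}|=k_1$ is enclosed by $H^{c}$. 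Writing $H^{c}=\{x:\langle x,v\rangle\le s\}$, this says $h_{\conv(I(A^{*}))}(v)\le s$, so $t_1(v)\le s$, and therefore the $r_1$-critical half-plane $C_1:=H_{v,t_1(v)}$ is contained in $H^{c}$. Consequently its complement $C_2:=C_1^{c}$ \emph{contains} $H$.

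Now $(C_1,C_2)$ is the desired equitable $(r_1,r_2)$ cut. Since $C_1$ is $r_1$-critical for $A$, $\mu_A(C_1)\ge k_1\ge (2|A|/3)^{r_1/r}-2$, and the same support lemma gives $\mu_A(C_2)>(3/2)^{r_1/r}|A|^{1-r_1/r}\ge |A|^{r_2/r}> (2|A|/3)^{r_2/r}-2$. Since $r_1$ is positive for $A$ and $C_1$ is $r_1$-critical for $A$, $\mu_B(C_1)>(2|B|/3)^{r_1/r}-2$. Finally $C_2\supseteq H$, and enclosure is monotone under inclusion, so $\mu_B(C_2)\ge \mu_B(H)>(2|B|/3)^{r_2/r}-2$, the last step because $r_2$ is positive for $A$ and $H$ is $r_2$-critical for $A$. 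All four required inequalities hold, contradicting the hypothesis. This settles the case ``both positive for $A$''; the case ``both negative for $A$'' is identical after interchanging $A$ and $B$.

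The steps are conceptually light; the part to be careful with is bookkeeping — verifying the two short chains of inequalities lower-bounding $\mu_A(H^{c})$ and $\mu_A(C_2)$ (they reduce to the elementary monotonicity facts above), and keeping orientations straight when passing between a half-plane, its support function, and its complement. The only genuine caveat is the degenerate small-$|A|$ (equivalently small-$k_i$) regime in which $r_i$-critical half-planes may fail to exist; as usual this should be absorbed into the base of the induction for Theorem~\ref{theorem-main}, where the claimed bound is non-positive anyway.
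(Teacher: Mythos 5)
Your proof is correct and follows essentially the same route as the paper's: the paper takes the left-facing $r_1$-critical and right-facing $r_2$-critical \emph{vertical} half-planes, uses Lemma \ref{lemma-two-parts} to see they do not overlap ($x_0\le x_1$), and cuts with any vertical line in between, whereas you run the identical argument in an arbitrary direction and cut exactly along the boundary of the $r_1$-critical half-plane contained in $H^c$. The verification of the four inequalities (criticality for the $\mu_A$ bounds on one side, the corollary of Lemma \ref{lemma-two-parts} on the other, and positivity of both signs plus monotonicity of $\mu_B$ under inclusion) is the same in both versions.
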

	
	\begin{proof}
		Suppose that there are such $r_1, r_2$ with the same sign and we look for a contradiction.  By swapping the role of $A$ and $B$, we may assume that both $r_1, r_2$ have positive sign.  Let $x_0$ be the value such that the half-plane $H_0 = \{(x,y) : x \le x_0\}$ is $r_1$-critical for $A$.  Let $x_1$ be the value for which the half-plane $H_1 = \{(x,y) : x \ge x_1\}$ is $r_2$-critical for $A$.  By Lemma \ref{lemma-two-parts}, we know $x_0 \le x_1$.  Therefore, any vertical line between these two half-planes induces an $(r_1, r_2)$ equitable cut, which is the contradiction we wanted.
	\end{proof}

\section{Region of convex canonical cuts}\label{section-region-construction}

For this section, consider $A$ to be a finite set of lines in general position and $r$ a positive integer.  By applying an appropriate rotation we may assume that no two points of $I(A)$ have the same $x$-coordinate.  Let $(r_1, r_2, r_3)$ be a triple of positive integers such that $r_1 + r_2 + r_3 = r$.  Let $x_0, x_1$ be the numbers such that the half-planes
\begin{align*}
	H_0 & = \{(x,y) :  x \le x_0\} \\
	H_1 & = \{(x,y) : x \ge x_1\}
\end{align*}

are $r_1$-critical and $r_2$-critical for $A$, respectively.  We know that $x_0 \le x_1$.  For each point $p=(x,y)$ such that $x_0 \le x \le x_1$ we are going to define a \textbf{canonical $p$-cutting}.  This is going to be a partition of the plane into three parts $(C_1,C_2,C_3)$.

For convenience, let $M_i = \left\lceil \left( \frac{2|A|}{3}\right)^{r_i/r}\right\rceil-2$ for $i=1,2,3$.  In order to find our partition, the main idea is to construct three rays starting from $p$.  The first ray $r_0$ is pointing downwards.  Given an angle $\alpha_1$, we define $C_1$ to be the region made by a clockwise angle of $\alpha_1$ starting at $r_0$.  We choose $\alpha_1$ to be the minimum number such that $\mu_A(C_1) = M_1$.  Notice that due to the location of $p$, we know that $\alpha \le \frac{\pi}{2}$ (i.e., $C_1$ is convex).  We define $\alpha_2, C_2$ equivalently on the other side with $\alpha_2$ now begin a counter-clockwise angle such that $\mu_A(C_2) = M_2$ and $\alpha_2$ is minimal with that property.  The region $C_3$ is the top region, which may or may not be convex.  See Figure \ref{figure-canonical}.

\begin{figure}[h]
	\centerline{\includegraphics{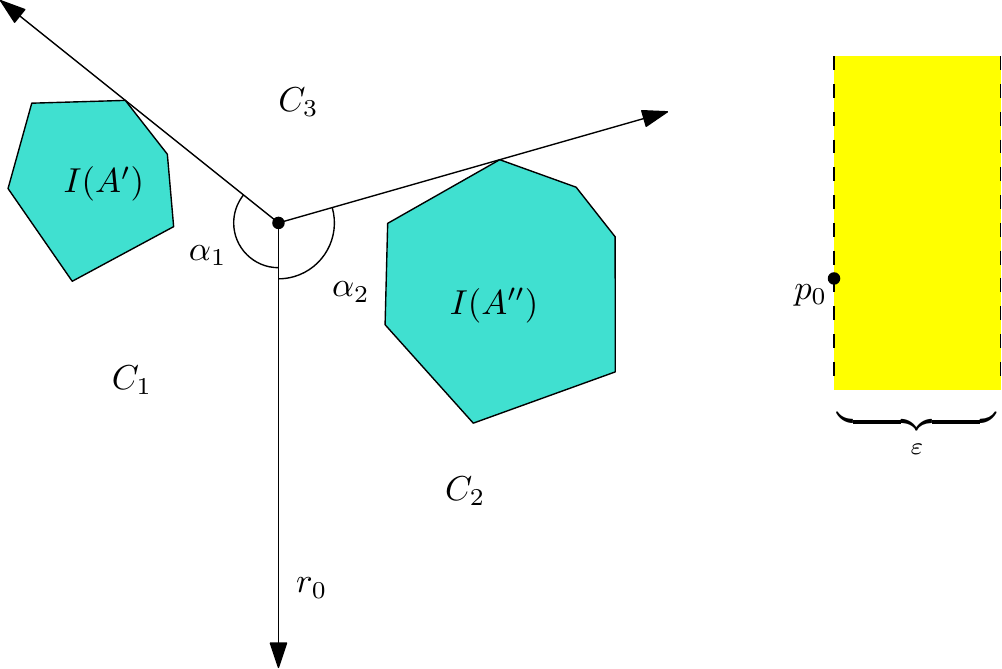}}
	\caption{Construction of the $p$-canonical cutting. For points in the vertical strip of width $\varepsilon$ on the right side of a point $p_0$ of $I(A)$, we need to redefine the value of $\alpha_1$.}\label{figure-canonical}
\end{figure}

The main issue with this construction is that $\alpha_1, \alpha_2$ are not continuous as functions of $p$, which is undesirable.  The discontinuities may occur when $p$ shares the $x$-coordinate of a point of $I(A)$.  We will refine the definitions of $\alpha_1$ and $\alpha_2$ to avoid this problem.  We will redefine the values of $\alpha_1, \alpha_2$ when the $x$-coordinate of $p$ is equal to $x_0$, when it is equal to $x_1$, and when it it very close to the $x$-coordinate of a point in $I(A)$, but not very close to $x_0$ or $x_1$.

Let $p_0 = (x',y')$ be a point of $I(A)$ such that $x_0 < x' < x_1$.  Let $\varepsilon$ be a small positive real number such that the no closed vertical strip of width $2\varepsilon$ contains more than one point of $I(A)$.  If $p = (x,y)$ and $p = (x'+t\varepsilon, y)$ for some $t \in [0,1]$, we redefine $\alpha_1$.  Let
\begin{itemize}
	\item $\alpha'_1$ be the minimum angle such that $C_1$ encloses a subset $A' \subset A$ of size $M_1$.
	\item $\alpha''_1$ be the minimum angle such that $C_1$ encloses a subset $A' \subset A$ of size $M_1$ but such that $p_0 \not\in I(A')$. 
\end{itemize}
In order to define $C_1$, we use an angle of $\alpha_1 = t\alpha'_1 + (1-t)\alpha''_1$.  Since $\alpha_1'' \ge \alpha'_1$, we know that $\alpha_1 \ge \alpha'_1$, so $\mu_A (C_1) \ge M_1$.  However, if $\mu_A(C_1) \ge M_1 + 2$, by removing at most one line from the set realizing $\mu_A(C_1)$, we can assume that $p_0$ is not part of its incidence set.  This would imply that $\alpha_1 > \alpha''_1$, which is a contradiction.  Therefore, $M_1 +1 \ge \mu_A(C_1)$.

The value of $\alpha_2$ is not changed in this region.  If $p = (x'-t\varepsilon, y)$, then we do an analogous modification by swapping the roles of $C_1, C_2$.  The reader may notice that now the angles $\alpha_1, \alpha_2$ are continuous functions of the point $p$.

Along the line $x = x_0$ and the region $x_0 \le x \le x_0 + \varepsilon$ we will redefine $\alpha_1$.  For this, assume that we have another set $B$ of lines in general position, and that $\mu_B(H_0) \ge \left\lceil \left(\frac{2|B|}{3}\right)^{r_1/r} \right\rceil - 2$.  Also assume that $I(B)$ has no points on the line $x=x_0$.

Since $H_0$ is $r_1$-critical for $A$, it means that there is a set $A_0 \subset A$ such that $H_0$ encloses $A_0$, the cardinality of $A_0$ is exactly $M_1$ and there is a unique point $p_0 = (x_0,y_0)$ in $I(A_0)$.  Notice that $\alpha_1$ is not continuous in the line $x= x_0$.  At any point $p = (x_0, y)$ with $y< y_0$, $\alpha_1 = \pi/2$.  However, at $p_0$, $\alpha_1$ defines a ray $r_1$ whose slope is equal to the slope of the top tangent of $\conv(I(A_0))$ at $p_0$ (if there are multiple sets $A_0$ that satisfy the properties above, then the slope is the minimum of the top tangents to those sets).  For a point $p = (x_0, y)$ we define

\begin{itemize}
	\item $\alpha'_1$ to be the minimum angle such that $\mu_A(C_1) \ge M_1$ and
	\item $\beta_1$ to be the minimum angle such that $\mu_B (C_1) \ge \left\lceil \left(\frac{2|B|}{3}\right)^{r_1/r} \right\rceil-2$. 
\end{itemize}

Now we define the angle $\tilde{a}_1 = \tilde{a}_1(x_0, y)$ of $C$ as
\[
\tilde{a}_1 = \begin{cases}
	\pi / 2 & \mbox{ if } y < y_0 \\
	t \max \{\alpha'_1, \beta_1 \} + (1-t) \pi / 2 & \mbox{ if } y = y_0 + t \varepsilon, \qquad t \in [0,1] \\
	\max \{\alpha'_1, \beta_1 \} & \mbox{ if } y > y_0 + \varepsilon
\end{cases}
\]

The region $C_1$ defined by angle $\tilde{\alpha}_1$ satisfies $\mu_A(C_1) = M_1$ (since $H_0$ was $r_1$-critical) and $\mu_B (C_1) \ge \left\lceil \left(\frac{2|B|}{3}\right)^{r_1/r} \right\rceil-2$.

Then, for $p = (x_0 + s\varepsilon, y)$ for some $s \in [0,1]$, we define
\begin{itemize}
	\item $\alpha''_1$ the minimum angle such that the region $C_1$ satisfies $\mu_A(C_1) \ge M_1$.
	\item $\alpha_1 = s\alpha''_1 + (1-s)\tilde{\alpha}_1(x_0,y)$.
\end{itemize}

The angle $\alpha_1$ is now a continuous function, $M_1 + 1 \ge \mu_A (C_1) \ge M_1$ and the behavior of $C_1$ on the line $x = x_0$ is the one we described above. We do an analogous definition for $\alpha_2$.  Now we are ready to define the region of the points $p$ we are interested in.

Let
\[
R = \{p=(x,y) \in \rr^2 : x_0 \le x \le x_1 \mbox{ and the region } C_3 \mbox{ of the canonical $p$-cutting is convex}\}.
\]

Notice that the top boundary of $R$ is defined by the equation $\alpha_1 + \alpha_2 \ge \pi/2$.  The region $R$ is bounded above and unbounded below.  For every point on the top boundary the region $C_3$ is a half-plane.  Moreover, the continuity of $\alpha_1$ and $\alpha_2$ implies that there are no vertical segments in the boundary of $R$ except for those contained on the lines $x= x_0$ and $x = x_1$.

\begin{figure}[h]\label{figure-region-R}
	\centerline{\includegraphics{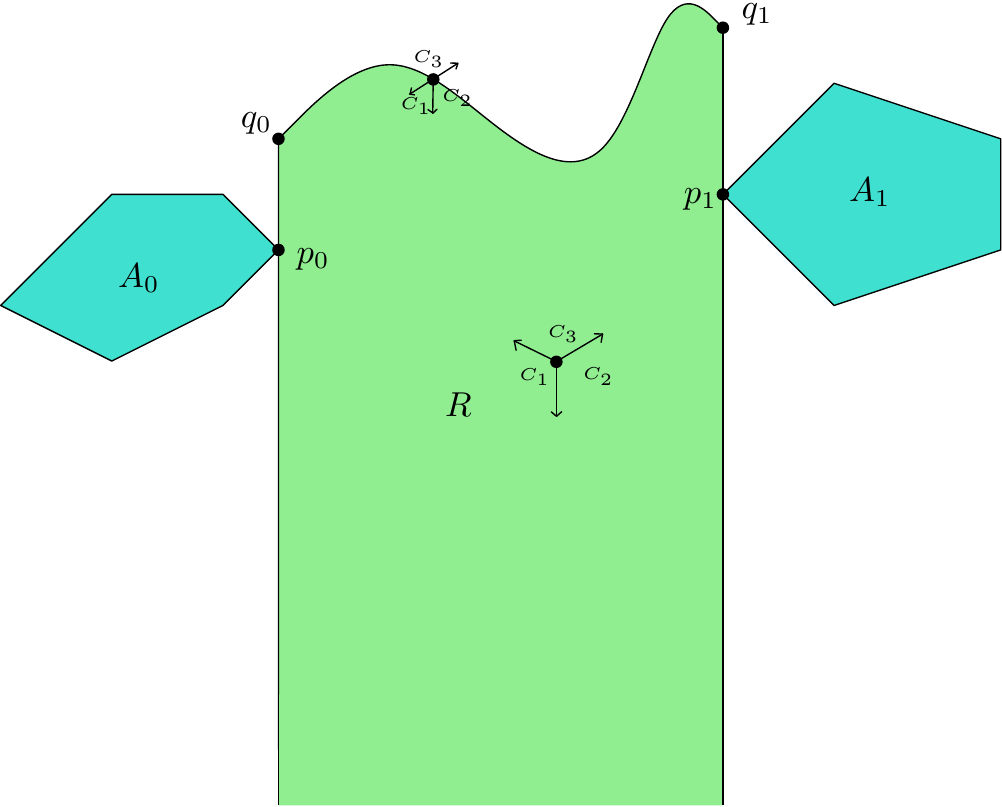}}
	\caption{An example of the region $R$, with a couple of canonical $p$-cuttings marked.}
\end{figure}



\begin{claim}\label{claim-goodpartition}
	For every point $p$ of $R$, we have $\mu_A(C_3) \ge M_3.$
\end{claim}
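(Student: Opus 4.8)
The plan is to use Lemma~\ref{lemma-xue2} applied to the canonical $p$-cutting $(C_1, C_2, C_3)$. By the construction of the canonical $p$-cutting, we know $\mu_A(C_1) \le M_1 + 1$ and $\mu_A(C_2) \le M_2 + 1$ — these upper bounds were established carefully when redefining $\alpha_1, \alpha_2$ near the critical lines and near $x$-coordinates of points of $I(A)$. First I would record the trivial but crucial facts that $(C_1, C_2, C_3)$ is an honest convex partition of $\rr^2$ into three parts whenever $p \in R$ (convexity of $C_3$ is exactly the defining condition of $R$, and convexity of $C_1, C_2$ follows from $\alpha_1, \alpha_2 \le \pi/2$, which in turn follows from $x_0 \le x \le x_1$). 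Then Lemma~\ref{lemma-xue2} gives
\[
\mu_A(C_1)\,\mu_A(C_2)\,\mu_A(C_3) \ge \frac{2n}{3} = \frac{2|A|}{3}.
\]

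Next I would combine this with the upper bounds on $\mu_A(C_1)$ and $\mu_A(C_2)$. Since $M_i = \lceil (2|A|/3)^{r_i/r}\rceil - 2$, we have $M_i + 1 \le (2|A|/3)^{r_i/r} - 1 < (2|A|/3)^{r_i/r}$, so $\mu_A(C_1) < (2|A|/3)^{r_1/r}$ and $\mu_A(C_2) < (2|A|/3)^{r_2/r}$. Plugging into the product inequality,
\[
\mu_A(C_3) \ge \frac{2|A|/3}{\mu_A(C_1)\,\mu_A(C_2)} > \frac{2|A|/3}{(2|A|/3)^{r_1/r}\,(2|A|/3)^{r_2/r}} = \left(\frac{2|A|}{3}\right)^{1 - (r_1+r_2)/r} = \left(\frac{2|A|}{3}\right)^{r_3/r},
\]
using $r_1 + r_2 + r_3 = r$. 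Since $\mu_A(C_3)$ is an integer and $\left(\frac{2|A|}{3}\right)^{r_3/r} > M_3$ would follow automatically from $M_3 \le \left(\frac{2|A|}{3}\right)^{r_3/r}$, I get $\mu_A(C_3) \ge \left\lceil \left(\frac{2|A|}{3}\right)^{r_3/r}\right\rceil \ge M_3 + 2 \ge M_3$. (Even the crude bound $\mu_A(C_3) \ge \lceil (2|A|/3)^{r_3/r} \rceil - 2 = M_3$ suffices.)

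The one point that needs care — and which I expect to be the main obstacle — is making sure the upper bounds $\mu_A(C_1) \le M_1 + 1$, $\mu_A(C_2) \le M_2 + 1$ hold uniformly over \emph{all} of $R$, including in the special strips near $x = x_0$, near $x = x_1$, and near the $x$-coordinates of points of $I(A)$, where $\alpha_1$ and $\alpha_2$ were redefined by interpolation. In the generic region the bound is immediate from the minimality of $\alpha_1$ (increasing $\alpha_1$ past the value achieving $\mu_A(C_1) = M_1$ by the amount needed to swallow one more point of $I(A)$ would contradict minimality). In each redefined strip the excerpt already verifies $M_i + 1 \ge \mu_A(C_i) \ge M_i$ by the same swap-out argument (remove one line to kill the offending intersection point $p_0$), so I would just cite those verifications. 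Once the uniform upper bound is in hand, the rest is the short computation above; there is no topology here, only Lemma~\ref{lemma-xue2} and arithmetic with the exponents.
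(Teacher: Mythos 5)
Your proposal is correct and follows exactly the paper's own argument: apply Lemma~\ref{lemma-xue2} to the canonical $p$-cutting, invoke the upper bounds $\mu_A(C_i)\le M_i+1$ for $i=1,2$ that were built into the construction of $\alpha_1,\alpha_2$, and observe that $(M_1+1)(M_2+1)(M_3+1)<\tfrac{2|A|}{3}$ forces $\mu_A(C_3)\ge M_3$. One cosmetic slip: $M_i+1=\lceil (2|A|/3)^{r_i/r}\rceil-1$ need not be $\le (2|A|/3)^{r_i/r}-1$, but it is strictly less than $(2|A|/3)^{r_i/r}$, which is all your computation uses.
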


\begin{proof}
	If we apply Lemma \ref{lemma-xue2}, we know that $\mu_A(C_1) \mu_A(C_2) \mu_A(C_3) \ge \frac{2|A|}{3}$.  We also know that $(M_1 +1)( M_2 +1)( M_3+1) < \frac{2|A|}{3}$.  Therefore, since $\mu_A (C_1) \le M_1 +1$ and $\mu_A (C_2) \le M_2 +1$, we have $\mu_A (C_3) \ge M_3$.\end{proof}

\section{Proof of Lemma \ref{lemma-strong-equitable-cuts}}\label{section-big-proof}

With the construction of the region $R$ in the previous section, we are ready to prove Lemma \ref{lemma-strong-equitable-cuts}.

\begin{proof}[Proof of Lemma \ref{lemma-strong-equitable-cuts}]
	Let $A, B$ be two sets of lines in $\rr^2$, each in general position.  We may assume that no two points of $I(A) \cup I(B)$ share an $x$-coordinate.  If there is no $(r_1, r_2)$-equitable cut for any pair $(r_1, r_2)$ of positive integers with sum $r$, we may assign to every integer in $\{1,\ldots, r-1\}$ a sign as in Section \ref{section-equitable-cuttings}.  By Theorem \ref{theorem-summands}, there exists a triple $(r_1, r_2, r_3)$ of positive integers whose sum is $r$ and each $r_i$ has the same sign.  By swapping the roles of $A$ and $B$, we can assume that the sign of each $r_i$ is positive.
	
	For each $i \in \{1,2,3\}$, let $N_i = \left( \frac{2|B|}{3}\right)^{r_i/r}-2$.
	
	We are going to color the points of $R$, where each color is represented by an element of $\{1,2,3\}$.  The point $p \in R$ is going to be colored of color $i$ if and only if in the canonical cutting of $p$, we have $\mu_B (C_i) \ge N_i$.  Points are allowed to have multiple colors.  By Claim \ref{claim-goodpartition}, it is sufficient to prove that a point $p$ has all three colors to finish the proof.  Notice the following lemma.
	
\begin{claim}
	Every point of $R$ has at least one color.
\end{claim}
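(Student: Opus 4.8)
The plan is to show that the three color classes $Z_1, Z_2, Z_3$ (where $Z_i = \{p \in R : \mu_B(C_i) \ge N_i\}$) cover $R$ by an application of Lemma \ref{lemma-xue2} to the set $B$. For a fixed $p \in R$, the canonical $p$-cutting $(C_1, C_2, C_3)$ is a convex partition of $\rr^2$ into three convex parts, since $p \in R$ guarantees $C_3$ is convex. Hence Lemma \ref{lemma-xue2} gives
\[
\mu_B(C_1)\,\mu_B(C_2)\,\mu_B(C_3) \ge \frac{2|B|}{3}.
\]
First I would record that $N_1 N_2 N_3 < \frac{2|B|}{3}$, which is immediate from the definition $N_i = \left(\frac{2|B|}{3}\right)^{r_i/r} - 2 < \left(\frac{2|B|}{3}\right)^{r_i/r}$ together with $r_1 + r_2 + r_3 = r$, so that $\prod_i \left(\frac{2|B|}{3}\right)^{r_i/r} = \frac{2|B|}{3}$.

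Now suppose, for contradiction, that some $p \in R$ has none of the three colors. Then $\mu_B(C_i) < N_i$ for every $i \in \{1,2,3\}$, and multiplying these three strict inequalities yields $\mu_B(C_1)\mu_B(C_2)\mu_B(C_3) < N_1 N_2 N_3 < \frac{2|B|}{3}$, contradicting the bound from Lemma \ref{lemma-xue2}. Hence every point of $R$ has at least one color, which is exactly the claim.

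One routine point to check before multiplying the inequalities is that each $N_i$ is nonnegative (or at least that the $\mu_B(C_i)$ are); since $\mu_B(C_i) \ge 0$ always, if some $N_i \le 0$ then color $i$ is automatic and $p$ trivially has a color, so one may assume all $N_i > 0$ and the multiplication of strict inequalities is valid. I do not expect any serious obstacle here — the only thing to be careful about is that Lemma \ref{lemma-xue2} genuinely applies, i.e.\ that $(C_1, C_2, C_3)$ really is a convex partition into \emph{three} convex pieces; this is precisely why $R$ was defined as the set of $p$ for which $C_3$ is convex, and the degenerate cases (some $C_i$ empty, or the partition being into fewer than three genuine pieces) only make the inequality easier. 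So the main content is just the elementary observation that one cannot have three numbers each below $N_i$ whose product is at least $\prod N_i$.
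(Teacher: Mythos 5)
Your proof is correct and is exactly the paper's argument: the paper's own proof of this claim is the one-line observation that it "follows from Lemma \ref{lemma-xue2} and the fact that $N_1 N_2 N_3 < \frac{2|B|}{3}$," which you have simply written out in full (including the harmless check that the $N_i$ may be assumed positive). No differences to report.
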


\begin{proof}
This follows from Lemma \ref{lemma-xue2} and the fact that $N_1 N_2 N_3 < \frac{2|B|}{3}.$
	\end{proof}
	
	\begin{claim}
		Every point of the top boundary of $R$ has color $3$, every point of $R$ on the line $x=x_0$ has color $1$ and every point of $R$ on the line $x = x_1$ has color $2$.
	\end{claim}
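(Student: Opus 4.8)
The plan is to verify the three boundary statements one at a time. I expect the two statements about the vertical sides --- color $1$ everywhere on $x=x_0$ and color $2$ everywhere on $x=x_1$ --- to fall out directly from the way $\tilde\alpha_1$ and $\alpha_2$ were chosen near those lines, and the statement about the top boundary --- color $3$ --- to need a short additional argument based on shrinking a half-plane until it becomes critical. That last case is where the real content lies.

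For the vertical sides I would argue as follows. Since $r_1$ is positive for $A$, every $r_1$-critical half-plane $H$ for $A$ has $\mu_B(H)>\left(\frac{2|B|}{3}\right)^{r_1/r}-2$, and as $\mu_B(H)\in\Z$ this forces $\mu_B(H)\ge\left\lceil\left(\frac{2|B|}{3}\right)^{r_1/r}\right\rceil-2$; in particular $\mu_B(H_0)\ge\left\lceil\left(\frac{2|B|}{3}\right)^{r_1/r}\right\rceil-2$, which is precisely the hypothesis under which the angle $\tilde\alpha_1$ was defined along $x=x_0$. By that definition $\tilde\alpha_1\ge\beta_1$, the minimal angle with $\mu_B(C_1)\ge\left\lceil\left(\frac{2|B|}{3}\right)^{r_1/r}\right\rceil-2$, and on the initial segment $y<y_0$ the region $C_1$ retains this property (as recorded in the construction of the region of canonical cuts); hence for every $p=(x_0,y)\in R$ the region $C_1$ of the canonical $p$-cutting satisfies $\mu_B(C_1)\ge\left\lceil\left(\frac{2|B|}{3}\right)^{r_1/r}\right\rceil-2\ge N_1$, i.e.\ $p$ has color $1$. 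The statement for $x=x_1$ follows in the same way, with $C_2$ in place of $C_1$, using that $r_2$ is positive for $A$ and that $H_1$ is $r_2$-critical for $A$.

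For the top boundary I would use a sliding argument. Fix $p$ on the top boundary; then $C_3$ is a closed half-plane, say $C_3=H'$ with boundary line $\ell$, and $\mu_A(C_3)\ge M_3\ge\left(\frac{2|A|}{3}\right)^{r_3/r}-2$ by Claim \ref{claim-goodpartition}. Translating $\ell$ continuously into $H'$ produces a decreasing family of closed half-planes $H'_t\subseteq H'$ with $H'_0=H'$; set $t^*=\sup\{t\ge 0: H'_t\text{ encloses a subset of }A\text{ of cardinality }M_3\}$ and $H''=H'_{t^*}$. Because $A$ has finitely many $M_3$-subsets and the $H'_t$ are closed and nested, one such subset is enclosed by $H''$, so $\mu_A(H'')\ge M_3\ge\left(\frac{2|A|}{3}\right)^{r_3/r}-2$; and if $\inter(H'')$ enclosed an $M_3$-subset, its compact convex hull would lie in $H'_{t^*+\delta}$ for some $\delta>0$, contradicting the choice of $t^*$, so $\mu_A(\inter H'')\le M_3-1<\left(\frac{2|A|}{3}\right)^{r_3/r}-2$ (the last step since $M_3+1=\left\lceil\left(\frac{2|A|}{3}\right)^{r_3/r}\right\rceil-1<\left(\frac{2|A|}{3}\right)^{r_3/r}$). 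Hence $H''$ is $r_3$-critical for $A$, and since $r_3$ is positive for $A$, $\mu_B(H'')>\left(\frac{2|B|}{3}\right)^{r_3/r}-2=N_3$. As $H''\subseteq C_3$, this gives $\mu_B(C_3)\ge\mu_B(H'')>N_3$, so $p$ has color $3$.

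The hard part is this top-boundary case. The obstacle is that $C_3$ there, although literally a half-plane, need not be a critical one, so the sign condition on $r_3$ cannot be invoked for it directly; the sliding trick sidesteps this by replacing $C_3$ with a critical half-plane $H''\subseteq C_3$ obtained by shrinking --- which only decreases $\mu_A$ and keeps us inside $C_3$ --- and then pushing the lower bound on $\mu_B(H'')$ coming from positivity back up to $\mu_B(C_3)$. I would also take minor care that the supremum $t^*$ is finite and attained in the relevant sense, and that the unboundedness of $R$ below is harmless (it is, since the canonical cutting is eventually constant for $p$ far enough down). By contrast, the two vertical-side cases are comparatively soft, precisely because the required $\mu_B$-bounds were deliberately built into the definitions of $\tilde\alpha_1$ near $x=x_0$ and of $\alpha_2$ near $x=x_1$.
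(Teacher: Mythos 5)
Your proposal is correct and takes essentially the same route as the paper: the vertical sides follow from the positivity of $r_1,r_2$ together with the way $\tilde\alpha_1$ (resp.\ $\alpha_2$) was built along $x=x_0$ (resp.\ $x=x_1$), and the top boundary follows by locating an $r_3$-critical half-plane inside the half-plane $C_3$ and invoking the sign of $r_3$. The only difference is that you spell out, via the sliding/shrinking argument, why $C_3$ must contain such a critical half-plane, a step the paper asserts in a single sentence.
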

	
	\begin{proof}
	Since $r_1, r_2, r_3$ all have positive sign, we know that $\mu_B(H_0) \ge N_1$ and $\mu_B(H_1) \ge N_2$.  By the definition of the canonical cuttings on the lines $x=x_0$ and $x=x_1$, we have the claim on the left and right boundaries.  For every point $p$ on the top boundary, we know that $C_3$ is a half-plane with $\mu_A(C_3) \ge M_3$.  This means that $C_3$ contains a $r_3$-critical halfplane $C'_3$.  Therefore, $\mu_B (C_3) \ge \mu_B(C'_3) \ge N_3$, due to the sign of $r_3$. 
	\end{proof}
	
	\begin{claim}
	The color classes are closed sets.	
	\end{claim}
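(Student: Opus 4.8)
The plan is to reduce the statement to an elementary point-set fact and then verify that fact using the continuity of the angles $\alpha_1,\alpha_2$ established in Section~\ref{section-region-construction}. Write $C_i(p)$ for the $i$-th region of the canonical $p$-cutting of a point $p\in R$. I claim it suffices to prove that, for each fixed $q\in\rr^2$ and each $i\in\{1,2,3\}$, the set $\{p\in R: q\in C_i(p)\}$ is closed. Indeed, fix a color $i$. For $p\in R$ all three regions of the canonical $p$-cutting are convex (this is exactly what the definition of $R$ buys us), so for a subset $B'\subseteq B$ the condition $I(B')\subset C_i(p)$ is equivalent to requiring that every point of the finite set $I(B')$ lie in $C_i(p)$. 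Since $B$ is finite there are only finitely many candidate subsets $B'$, and $\mu_B(C_i(p))\ge N_i$ holds precisely when $I(B')\subset C_i(p)$ for some $B'$ with $|B'|\ge N_i$. Hence the color-$i$ class equals
\[
\bigcup_{\substack{B'\subseteq B\\ |B'|\ge N_i}}\ \bigcap_{q\in I(B')}\ \{p\in R: q\in C_i(p)\},
\]
a finite union of finite intersections of sets of the advertised form. One also records that $R$ itself is closed in $\rr^2$, being the intersection of the closed strip $x_0\le x\le x_1$ with a sublevel set of the continuous function $\alpha_1+\alpha_2$, so ``closed in $R$'' and ``closed in $\rr^2$'' coincide here.

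For the pointwise fact I would use that, on $R$, $\alpha_1$ and $\alpha_2$ are continuous functions of $p$ (Section~\ref{section-region-construction}). Each $C_i(p)$ is a closed convex angular region (or half-plane) with apex $p$, and by the continuity of $\alpha_1,\alpha_2$ the directions of its two bounding rays are continuous functions of $p$; hence $C_i(p)$ can be written as $\{x:\langle x-p,\,u_i(p)\rangle\ge 0\}\cap\{x:\langle x-p,\,u_i'(p)\rangle\ge 0\}$ for inner unit normals $u_i,u_i'\colon R\to\rr^2$ that depend continuously on $p$ (when $C_i(p)$ degenerates to a half-plane, $u_i=u_i'$). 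It is precisely here that convexity of $C_i$, i.e. the restriction $p\in R$, is needed: for a non-convex top region such a representation as an intersection of two half-planes would fail. Given this, for fixed $q$ the condition $q\in C_i(p)$ becomes the conjunction of the two inequalities $\langle q-p,\,u_i(p)\rangle\ge 0$ and $\langle q-p,\,u_i'(p)\rangle\ge 0$, whose left-hand sides are continuous functions of $p$ on $R$. Therefore $\{p\in R: q\in C_i(p)\}$ is closed, and by the reduction above each color class is closed.

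The only genuine content is the continuity of the regions $C_i$ in $p$, and the one place this is delicate is exactly along the lines $x=x_0$ and $x=x_1$ and near the $x$-coordinates of points of $I(A)$, where $\alpha_1$ and $\alpha_2$ would otherwise jump; but the refinements of the definitions of $\alpha_1,\alpha_2$ in Section~\ref{section-region-construction} were designed precisely to restore continuity there, so this obstacle has already been handled. A minor additional point to check is that the inner normals $u_i(p),u_i'(p)$ can be chosen continuously up to and including the top boundary of $R$, where $C_3$ flattens to a half-plane and the two normals merge; this is immediate once one writes the bounding rays of $C_3$ in terms of the outer rays of the cutting. With these remarks in place the argument above goes through.
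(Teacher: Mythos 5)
Your proof is correct and takes essentially the same route as the paper's: both arguments rest on the finiteness of $B$ (your finite union over candidate subsets $B'$ plays exactly the role of the paper's passage to a subsequence along which $C_i$ encloses a fixed subset) combined with the continuity of $\alpha_1,\alpha_2$. Your half-plane representation of $C_i(p)$ merely makes explicit why continuity of the angles forces each set $\{p\in R: q\in C_i(p)\}$ to be closed.
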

	
	\begin{proof}
		Take any converging  sequence of points of color $i$.  There must be a subsequence on which $C_i$ encloses the same subsets of $B$.  By the continuity of the angles $\alpha_1, \alpha_2$, on the point of convergence we also have that $C_i$ encloses those subsets.
	\end{proof}
	
	\begin{claim}
	There is a sufficiently small value $\tilde{y}$ such that every point on the intersection of the line $y = \tilde{y}$ and the region $R$ has colors $1$ and $2$.	
	\end{claim}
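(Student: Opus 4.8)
The plan is to send the apex $p$ of the canonical cutting far below every point of $I(A)\cup I(B)$: I will show that in this limit the cone $C_1$ degenerates to a half-plane containing the $r_1$-critical half-plane $H_0$ and $C_2$ degenerates to a half-plane containing the $r_2$-critical half-plane $H_1$, which (since $r_1,r_2$ are positive for $A$) enclose more than $N_1$ and $N_2$ lines of $B$, so colors $1$ and $2$ both appear. Before that, I would record that $R$ lies in the strip $x_0\le x\le x_1$, is bounded above, is unbounded below, and has no vertical boundary segments except along $x=x_0$ and $x=x_1$; therefore $R$ is precisely the region lying weakly below the graph of a continuous function on $[x_0,x_1]$ (its top boundary), and for every $\tilde y$ below the minimum of that graph the slice $R\cap\{y=\tilde y\}$ is the whole segment $\{(x,\tilde y):x_0\le x\le x_1\}$. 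Hence it suffices to produce $\tilde y^{\ast}$ such that for all $\tilde y\le\tilde y^{\ast}$ and all $x\in[x_0,x_1]$ the point $p=(x,\tilde y)$ has colors $1$ and $2$.

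The heart of the argument is a degeneration statement: fixing $x\in[x_0,x_1]$ and letting $t\to-\infty$, the cone $C_1$ of the canonical $(x,t)$-cutting converges, in the Hausdorff sense on compact sets, to a closed half-plane containing $H_0$ (and, symmetrically, $C_2$ converges to a closed half-plane containing $H_1$). To see this: up to the $\varepsilon$-modifications of $\alpha_1$, which keep $M_1\le\mu_A(C_1)\le M_1+1$, the cone $C_1$ is essentially the minimal cone with apex $(x,t)$, one edge pointing straight down, that encloses $M_1$ lines of $A$. When the apex is very low every point of $I(A)$ is seen from it in a direction arbitrarily close to ``straight up,'' so to enclose even one line the opening of $C_1$ must approach $\pi$; hence every subsequential Hausdorff limit of these cones is a closed half-plane, and — the straight-down edge receding to infinity while the other edge becomes vertical — its boundary is a vertical line $\{x=z\}$ for some $z$. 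Since this limit encloses at least $M_1$ lines of $A$, necessarily $z\ge x_0$, so it contains $H_0=\{x\le x_0\}$.

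Given this, choose $B'\subseteq B$ with $|B'|>N_1$ and $\conv(I(B'))\subseteq H_0$, possible because $H_0$ is $r_1$-critical and $r_1$ is positive for $A$; since $I(B)$ meets no point of the line $x=x_0$, the compact set $\conv(I(B'))$ lies in $\inter(H_0)$. Combining the degeneration statement with convexity of the cones $C_1$ forces $\conv(I(B'))\subseteq C_1$ once $t$ is negative enough, so $\mu_B(C_1)\ge|B'|>N_1$ and $p$ has color $1$; the symmetric argument gives color $2$. To make the threshold on $t$ uniform in $x$, I would argue by contradiction and compactness: if no single $\tilde y^{\ast}$ worked, take $x_n\in[x_0,x_1]$ and $t_n\to-\infty$ with $(x_n,t_n)$ missing, say, color $1$; passing to a subsequence with $x_n\to x^{\ast}\in[x_0,x_1]$, the degeneration argument still shows $C_1(x_n,t_n)$ converges to a half-plane containing $H_0$ (it used only that $t_n\to-\infty$ with $x_n$ in a compact interval), so $\conv(I(B'))\subseteq C_1(x_n,t_n)$ for large $n$, contradicting $\mu_B(C_1(x_n,t_n))<N_1$.

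I expect the main obstacle to be the degeneration statement itself: one has to verify that the free edge of the (nearly) minimal cone becomes vertical as the apex recedes, that the resulting vertical line sits at some $z\ge x_0$, and that none of the three families of $\varepsilon$-modifications of $\alpha_1$ (near $x=x_0$, near $x=x_1$, and near the $x$-coordinates of points of $I(A)$) spoil this; each modification only enlarges $C_1$ or varies it continuously, so it does no harm, but making this precise and extracting convergence uniform in $x$ is where the real work lies.
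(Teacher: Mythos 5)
Your proposal is correct and takes essentially the same approach as the paper: send the apex $p$ far downward so that $C_1$ opens up toward the half-plane $H_0$ (and $C_2$ toward $H_1$) and therefore encloses the subset of $B$ witnessing $\mu_B(H_0)>N_1$ (resp.\ $\mu_B(H_1)>N_2$), which is exactly what the positivity of the signs of $r_1$ and $r_2$ provides. The paper phrases the uniformity in $x$ via a continuously varying threshold $y_x$ minimized over the compact interval $[x_0,x_1]$, while you use a compactness--contradiction argument; this is only a cosmetic difference, and your more detailed treatment of the degeneration of $C_1$ and of the $\varepsilon$-modifications of $\alpha_1$ fills in steps the paper leaves implicit.
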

	
	\begin{proof}
	For every value of $x$, there has to be a $y_x$ such that if $y < y_x$, then $C_1$ encloses $A_0$ for the canonical cutting of $p=(x,y)$.  Moreover, the value of $y_x$ is a continuous function of $x$.  Therefore, it attains a minimum value $\tilde{y}_1$ on $[x_0, x_1]$.  Points in $R$ with $y$-coordinate $\tilde{y}_1$ or less will have color 1. Similarly, we can find a $\tilde{y}_2$ such that points in $R$ with $y$-coordinate $\tilde{y}_2$ or less will have color 2.
	\end{proof}
	
	We define $R' = \{p=(x,y) : p \in R, \ y \ge \tilde{y} \}$.  Now, we are able to apply the classic Knaster-Kuratowski-Mazurkiewicz (KKM) theorem in dimension two.
	
	\begin{theoremp}[Knaster, Kuratowski, Mazurkiewicz \cite{Knaster:1929vi}]
	Let $\Delta$ be a triangle with vertices $1, 2, 3$.  Suppose that $\Delta$ is colored with colors $\{1,2,3\}$ such that every vertex $i$ has color $i$, and every point on a side $ij$ has at least one of the colors $i$ or $j$.  If every color class is a closed set, then there is a point with all three colors.
	\end{theoremp}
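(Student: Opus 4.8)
The plan is to derive the statement from Sperner's lemma via a standard triangulate-and-pass-to-the-limit argument. Write $F_1, F_2, F_3 \subseteq \Delta$ for the three color classes; by hypothesis each $F_i$ is closed, their union is all of $\Delta$, the vertex $i$ of $\Delta$ lies in $F_i$, and every point of the side $ij$ lies in $F_i \cup F_j$. We must produce a point of $F_1 \cap F_2 \cap F_3$.

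First I would fix, for each positive integer $n$, a triangulation $T_n$ of $\Delta$ whose mesh (the largest edge length among its subtriangles) tends to $0$ as $n \to \infty$; the subdivision of $\Delta$ into $n^2$ congruent subtriangles works. To each vertex $v$ of $T_n$ I assign a label $\ell(v) \in \{1,2,3\}$ with $v \in F_{\ell(v)}$, which is possible because the $F_i$ cover $\Delta$. This choice can be made so as to respect the Sperner boundary condition: the vertex $i$ of $\Delta$ lies in $F_i$, so it gets label $i$; and a vertex lying on the side $ij$ lies in $F_i \cup F_j$, so it can be given a label in $\{i,j\}$. Hence $\ell$ is a Sperner labeling of $T_n$.

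Next I would invoke Sperner's lemma: every Sperner-labeled triangulation of a triangle contains a subtriangle whose three vertices carry the three distinct labels (indeed an odd number of such rainbow subtriangles). This is the one genuinely nontrivial ingredient, but it is purely combinatorial --- it follows from the classical parity count of the boundary edges labeled $\{1,2\}$, or by induction on the number of subtriangles --- and may simply be quoted. Let $\sigma_n$ be a rainbow subtriangle of $T_n$, with vertices $a_n \in F_1$, $b_n \in F_2$, $c_n \in F_3$.

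Finally I would take a limit. Since $\Delta$ is compact, some subsequence of $(a_n)$ converges to a point $x^{\ast} \in \Delta$; as $\|a_n - b_n\|$ and $\|a_n - c_n\|$ are bounded by the mesh of $T_n$, which goes to $0$, the corresponding subsequences of $(b_n)$ and $(c_n)$ converge to the same point $x^{\ast}$. Because $F_1$ is closed and $a_n \in F_1$ for every $n$, we conclude $x^{\ast} \in F_1$; likewise $x^{\ast} \in F_2$ and $x^{\ast} \in F_3$. Thus $x^{\ast}$ carries all three colors, as desired. The main obstacle is Sperner's lemma itself; the rest is a soft compactness-and-closedness argument, and the closedness hypothesis is used exactly once, in this last step, to upgrade ``within distance $o(1)$ of $F_i$ for all $n$'' to ``in $F_i$''. (Alternatively one could argue via the equivalent fact that there is no continuous retraction $\Delta \to \partial \Delta$, but the Sperner route keeps everything elementary and self-contained.)
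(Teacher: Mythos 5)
Your proof is correct. Note, however, that the paper does not prove this statement at all: it quotes the KKM theorem as a classical result with a citation to the original 1929 paper, so there is no internal proof to compare against. Your argument is the standard derivation of KKM from Sperner's lemma --- triangulate with mesh tending to zero, choose a Sperner labeling compatible with the boundary conditions (possible exactly because vertex $i$ lies in $F_i$ and points of side $ij$ lie in $F_i \cup F_j$), extract a rainbow subtriangle, and pass to a convergent subsequence using compactness of $\Delta$ and closedness of the color classes --- and it is complete modulo quoting Sperner's lemma, which is a legitimate classical ingredient to cite.
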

	
	We can choose a point on the left side of the boundary of $R'$ to be vertex $1$, a point on the right side of the boundary of $R'$ to be vertex $2$, and a point on the top boundary to be vertex $3$.  The application of the KKM theorem finishes the proof.

	\end{proof}

\section{Upper bounds}\label{section-upper-bounds}

In order to obtain upper bounds for our results, let's start with a single set of lines that is hard to split using vertical strips.  Consider $[a] = \{1,\ldots, a\}$.

\begin{theorem}\label{theorem-upper-bound-vertical}
	Let $a$ and $r$ be positive integers.  There is a set $A$ of $a^r$ lines in the plane in general position such that for any partition of the plane into $r$ closed vertical strips $A_1, \ldots, A_r$, there exists an $i \in [r]$ such that
	\[
	\mu_A(A_i) \le a.
	\] 
\end{theorem}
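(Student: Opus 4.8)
\emph{Proof proposal.} The plan is to pass to the point-line dual. Under the duality sending a point $(c,d)$ to the line $y=cx-d$, the $x$-coordinate of the intersection of two lines equals the slope of the segment joining the two dual points. Thus if $P\subset\rr^2$ is the set of $a^r$ points dual to $A$, then for every vertical strip $S=\{(x,y):u\le x\le v\}$ we have $\mu_A(S)=\mu_P([u,v])$, where for an interval $I\subseteq\rr$ we write $\mu_P(I)=\max\{|P'|:P'\subseteq P,\ \text{every connecting slope of }P'\text{ lies in }I\}$. A partition of the plane into $r$ closed vertical strips is determined by $r-1$ cut values $t_1\le\cdots\le t_{r-1}$ and corresponds to the partition of the slope line $\rr$ into $I_1=(-\infty,t_1],\,I_2=[t_1,t_2],\dots,I_r=[t_{r-1},\infty)$. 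We may assume the cuts are distinct (a strip degenerating to a line encloses at most two lines, hence at most $a$ when $a\ge 2$, and $a=1$ is immediate). So it suffices to build $a^r$ points in general position so that for every such partition some $I_t$ satisfies $\mu_P(I_t)\le a$.

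For the construction, fix reals $\rho_1,\dots,\rho_r$ that are pairwise separated by a huge amount and a very small $\varepsilon>0$, and put $\mathbf v_l=\varepsilon^l(1,\rho_l)$. Let $P$ be the Minkowski sum $\sum_{l=1}^r\{0,\mathbf v_l,2\mathbf v_l,\dots,(a-1)\mathbf v_l\}$; equivalently, writing $j-1=\sum_{l=1}^r a^{r-l}d_l$ with base-$a$ digits $d_l\in\{0,\dots,a-1\}$, the $j$-th point is $p_j=\sum_{l=1}^r d_l\mathbf v_l$. Since the first coordinate of $\mathbf v_l$ dominates those of $\mathbf v_{l+1},\dots,\mathbf v_r$ by an arbitrarily large factor, $p_1,p_2,\dots$ is sorted by $x$-coordinate. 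Given two points with digit vectors $(d_l)$ and $(d_l')$, call their \emph{level} the least index $l^\ast$ with $d_{l^\ast}\ne d_{l^\ast}'$; because $\mathbf v_{l^\ast}$ dominates $\mathbf v_{l^\ast+1},\dots,\mathbf v_r$, the difference of the two points equals $(d_{l^\ast}'-d_{l^\ast})\mathbf v_{l^\ast}$ up to a negligible correction, so every connecting slope of a level-$l$ pair lies in a tiny interval $J_l$ about $\rho_l$; taking $\varepsilon$ small enough makes $J_1,\dots,J_r$ pairwise disjoint, with large gaps. Finally, if $p_{i_1},p_{i_2},\dots$ (with $i_1<i_2<\cdots$) is a subsequence all of whose consecutive pairs have level $l$, then the digit $d_l$ strictly increases along it, so it has at most $a$ terms; equivalently, any subsequence with at least $a+1$ terms has two consecutive pairs of different levels. (A generic perturbation of $P$ — negligible compared with $\varepsilon^r$ — makes the dual line set $A$ be in general position without affecting any estimate.)

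Now fix a partition into $I_1,\dots,I_r$ and suppose for contradiction that $\mu_P(I_t)\ge a+1$ for every $t$. For each $t$ choose a subsequence of at least $a+1$ points whose connecting slopes all lie in $I_t$; by the last observation it has two consecutive pairs of distinct levels $l'\ne l''$, whose slopes lie in $J_{l'}\cap I_t$ and $J_{l''}\cap I_t$, so $I_t$ meets at least two of $J_1,\dots,J_r$. Summing over $t$, the number of incident pairs $(I_t,J_l)$ with $I_t\cap J_l\ne\emptyset$ is at least $2r$. But the $J_l$ are pairwise disjoint, so each cut lies in at most one $J_l$, and a fixed $J_l$ meets exactly $1+(\text{number of cuts inside }J_l)$ of the intervals; hence the total number of incident pairs is at most $r+(r-1)=2r-1$, a contradiction. (A cut landing on an endpoint of some $J_l$ is ruled out by genericity, or removed by an arbitrarily small perturbation of the cuts.) Therefore some $I_t$ meets at most one $J_l$; then every multi-point subsequence with slopes in $I_t$ has all consecutive pairs of a single level, so $\mu_P(I_t)\le a$, and dualizing back the corresponding vertical strip encloses at most $a$ lines of $A$.

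The step I expect to be the main obstacle is the slope estimate in the middle paragraph: one must choose the scales $\varepsilon^l$ and the separations among the $\rho_l$ so that, uniformly over the finitely many digit patterns, the level-$l$ connecting slopes are confined to windows $J_l$ that are simultaneously tiny and mutually far apart — this is exactly what the rapidly shrinking scales $\varepsilon^l$ buy, and it is the only place any real computation is needed. By contrast, the double-counting in the last paragraph, together with the elementary fact that a one-level subsequence is strictly monotone in a digit and hence short, is the conceptual heart of the argument and is essentially free.
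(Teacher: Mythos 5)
Your proposal is correct and follows essentially the same route as the paper: the same Minkowski-sum construction of $a^r$ points from geometrically separated vectors with well-separated slopes, the same observation that the dominant differing digit confines each connecting slope to one of $r$ disjoint windows, and the same pigeonhole/counting step showing some vertical strip's slope interval meets at most one window. Your explicit duality framing, reversed indexing of the dominant coordinate, and double-counting in place of the paper's ``simple inductive argument'' are only cosmetic differences.
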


\begin{proof}
Let us first construct a set $X$ of $a^r$ points in the plane.  We consider $v_1, \ldots, v_r$ nonzero vectors in $\rr^2$, none of which are vertical or horizontal, with the following properties
\begin{itemize}
	\item The slope of $v_i$ is $l_i$, and $l_1 < l_2 < \ldots < l_r$
	\item The norms of the vectors satisfy $||v_1|| < \ldots < ||v_r||$.  Moreover, $||v_{i+1}||$ is significantly larger than $||v_i||$, in a way that will be made precise below.
\end{itemize}

Then, let
\[
X = \left\{\sum_{i=1}^r x_i v_i : (x_1, \ldots, x_r) \in [a]^r\right\}.
\]
Let $\varepsilon >0$ be a sufficiently small number such that the intervals $I_i = [l_i-\varepsilon,l_i+\varepsilon]$ are pairwise disjoint.  For two points $p=\sum_{i=1}^r x_i v_i$ and $q=\sum_{i=1}^r y_i v_i$ in $X$, we consider
\[
m(p,q) = \max\{ i \in [a] : x_i \neq y_i\}.
\]
We want the sequence $||v_1||, \ldots, ||v_r||$ to grow fast enough so that, if $j = m(p,q)$, then the slope of $p-q$ lies in the interval $[l_j - \varepsilon, l_j + \varepsilon] = I_j$.

Now we construct the set $A$ of lines (given by their equations) as
\[
A = \{y=mx+c : (m,c) \in X\}
\]

If we are given two lines with equations $y=m_1 x + c_1$ and $y=m_2 x + c_2$, the $x$-coordinate of their intersection is given by $x = \frac{c_2 - c_1}{m_1 - m_2}$, which is the negative of the slope between $(m_1, c_1)$ and $(m_2,c_2)$.  Now, consider a partition of $\rr^2$ into $r$ vertical strips.  The boundary of these strips is given by the $r-1$ lines $x=t_1, x=t_2, \ldots, x=t_{r-1}$, for some real $t_1 \le \ldots \le t_{r-1}$.  A simple inductive argument shows that at least one of the intervals $[-\infty, t_1], [t_1, t_2], \ldots , [t_{r-1}, \infty]$ intersects at most one of the intervals $-I_1, \ldots, -I_r$.  Suppose that $[t_i, t_{i+1}]$ intersects only $-I_j$.  

This means that if a subset $A' \subset A$ is enclosed by the strip between the lines $x=t_i, x=t_{i+1}$, these lines came from a subset $X' \subset X$ whose pairwise slopes are contained in $I_j$.  This, in turn, implies that for $p,q \in X'$ we have that $m(p,q) = j$, so all the points of $X'$ differ in the $j$-th coordinate (as vectors in $[a]^r$).  Thus, $|A'|=|X'| \le a$, as we wanted.  If we want our set of lines to be in general position, a small perturbation of $A$ gives us the desired set.
\end{proof}

Now we are ready to prove Theorem \ref{theorem-upper-bound}.

\begin{proof}[Proof of Theorem \ref{theorem-upper-bound}]
	We may assume without loss of generality that $|A|$ is the $r$-th power of an integer.   We set $A$ to be the example from Theorem \ref{theorem-upper-bound-vertical}.  We set $B$ to be such that $I(B)$ is contained in a disk of small radius.  If we start to translate the set $B$ upwards, and $C_1, C_2, \ldots, C_r$ is a convex partition of the plane where each $C_i$ intersects both $\conv I(B)$ and $\conv I(A)$, then the boundary between all pairs $C_i, C_j$ is either above $\conv(B)$ or very close to a vertical line.  Therefore, for a sufficiently high $I(B)$, the properties that the set $A$ satisfies complete the proof.
\end{proof}

\section{Remarks}\label{section-remarks}

The proof of our main results follows the ideas from Bespamyatnikh, Kirkpatrick, and Snoeyink \cite{Bespamyatnikh:2000tn}.  It would be interesting to see if it is possible to obtain a proof that uses stronger topological tools, as we have for point sets \cite{Soberon:2012kp, Karasev:2014gi, Blagojevic:2014ey}.  The problem boils down to the following question, which would be a nice extension of the Erd\H{o}s-Szekeres theorem.

\begin{question}
	Determine if the following statement is true.  Given a finite set $A$ of lines in the plane and a convex partition $(C_1, \ldots, C_r )$ of the plane that comes from a power diagram, the following equation holds
	\[
	\mu_A(C_1)\mu_A(C_2) \ldots \mu_A(C_r) \ge |A|.
	\]
\end{question}

Actually, it would be sufficient to have 
\[
\max_{1\le i\le r} \mu_A(C_i) \ge |A|^{1/r}.
\]

One may also wonder what happens in higher dimensions.  We say that a set of hyperplanes $A$ in $\rr^d$ is in general position if every $d$ of its normal vectors are linearly independent and no $d+1$ hyperplanes of $A$ share a point.  We denote by $I(A)$ the set of all points that come from the intersection of $d$ hyperplanes in $A$, and say that $K$ encloses $A$ if $I(A) \subset K$.  It is natural to ask the following question.

\begin{question}
	Given $n, d, r$ positive integers, find the smallest value of $M=f_{d,r}(n)$ such that the following holds.  Given $d$ sets $A_1, \ldots, A_d$ of $M$ hyperplanes each in $\rr^d$, there exists a convex partition of $\rr^d$ into $r$ parts $(C_1, \ldots, C_r)$ such that each $C_j$ encloses a subset of at least $n$ hyperplanes of each $A_i$.
\end{question}

Dujmovi\'c and Langerman proved the existence of such a function when $r=2$, and the rate of growth of $f_{d,2}$ has been bounded by Conlon, Fox, Pach, Sudakov, and Suk \cite{Conlon:2014fx} to
\[
f_{d,2}( n) \le \operatorname{twr}_{d-1} (c_dn^2 \log n),
\]

where the tower function $\operatorname{twr}_{d-1}(\cdot)$ is the composition of the function $2^x$ with itself $d-1$ times, and $c_d$ is a constant that depends only on $d$.

If one is interested to see if the constant leading constant $r^{\ln(2/3)}$ can be replaced by $1$ with the current proof, we would need to remove the need for $2/3$ in Lemma \ref{lemma-xue} or in Lemma \ref{lemma-xue2}.  However, this replacement cannot be done for Lemma \ref{lemma-xue}. Figure \ref{figure-five-lines} shows an example of five lines and two rays such that none of the two sides of the broken line encloses more than two lines.

\begin{figure}
\centerline{\includegraphics[scale=0.5]{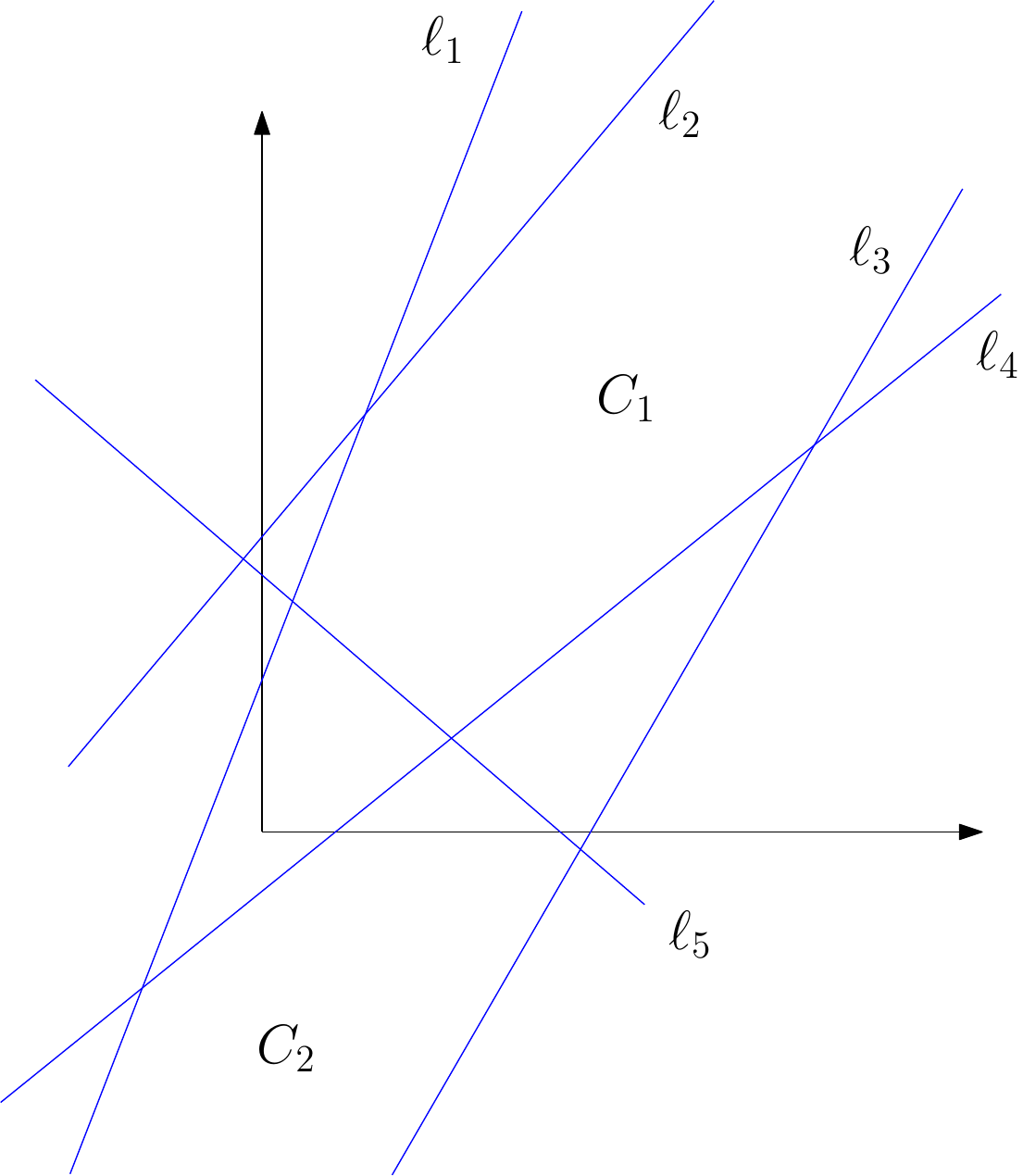}}	
\caption{Every triangle made by three blue lines intersects the broken black line.  The points $\ell_1 \cap \ell_3$ and $\ell_2 \cap \ell_4$ are in $C_2$.}
\label{figure-five-lines}
\end{figure}

The loss of $2r$ lines in Theorem \ref{theorem-main} comes from requiring that the angles of the canonical $3$-cuttings are continuous.  Without this assumption, we are not able to guarantee that the coloring of the region $R'$ satisfies the properties of the KKM theorem.  Moreover, it would also allow the top boundary of $R$ to have vertical segments, which makes the analysis more difficult (the arguments presented would only show that the top point of each vertical segment has color $3$, instead of the whole segment).


\newcommand{\etalchar}[1]{$^{#1}$}
\providecommand{\bysame}{\leavevmode\hbox to3em{\hrulefill}\thinspace}
\providecommand{\MR}{\relax\ifhmode\unskip\space\fi MR }
\providecommand{\MRhref}[2]{%
  \href{http://www.ams.org/mathscinet-getitem?mr=#1}{#2}
}
\providecommand{\href}[2]{#2}


\begin{thebibliography}{CFP{\etalchar{+}}14}

\bibitem[AAK18]{Akopyan:2018tr}
Arseniy Akopyan, Sergey Avvakumov, and Roman~N. Karasev, \emph{{Convex fair
  partitions into an arbitrary number of pieces}}, arXiv:1804.03057
  \textbf{[math.MG]} (2018).

\bibitem[BKS00]{Bespamyatnikh:2000tn}
S.~Bespamyatnikh, D.~Kirkpatrick, and J.~Snoeyink, \emph{{Generalizing ham
  sandwich cuts to equitable subdivisions}}, Discrete {\&} Computational
  Geometry \textbf{24} (2000), no.~4, 605--622.

\bibitem[BPSZ17]{BPSZ17}
Pavle V.~M. Blagojevi\'{c}, Nevena Pali\'{c}, Pablo Sober\'on, and
  G\"{u}nter~M. Ziegler, \emph{{Cutting a part from many measures}},
  arXiv:1710.05118v3 \textbf{[math.CO]} (2017), To appear in Forum of
  Mathematics, Sigma.

\bibitem[BRSZ19]{Blagojevic:2019gb}
Pavle V.~M. Blagojevi{\'c}, G{\"u}nter Rote, Johanna~K. Steinmeyer, and
  G{\"u}nter~M. Ziegler, \emph{{Convex Equipartitions of Colored Point Sets}},
  Discrete {\&} Computational Geometry \textbf{61} (2019), no.~2, 355--363.

\bibitem[BZ14]{Blagojevic:2014ey}
Pavle V.~M. Blagojevi{\'c} and G{\"u}nter~M. Ziegler, \emph{{Convex
  equipartitions via Equivariant Obstruction Theory}}, Israel journal of
  mathematics \textbf{200} (2014), no.~1, 49--77.

\bibitem[CFP{\etalchar{+}}14]{Conlon:2014fx}
David Conlon, Jacob Fox, J{\'a}nos Pach, Benny Sudakov, and Andrew Suk,
  \emph{{Ramsey-type results for semi-algebraic relations}}, Transactions of
  the American Mathematical Society \textbf{366} (2014), no.~9, 5043--5065.

\bibitem[DL13]{Dujmovic:2013bi}
Vida Dujmovi{\'c} and Stefan Langerman, \emph{{A center transversal theorem for
  hyperplanes and applications to graph drawing}}, Discrete {\&} Computational
  Geometry \textbf{49} (2013), no.~1, 74--88.

\bibitem[GK15]{Guth:2015tu}
Larry Guth and Nets~Hawk Katz, \emph{{On the Erd{\H o}s distinct distances
  problem in the plane}}, Annals of Mathematics \textbf{181} (2015), no.~1,
  155--190.

\bibitem[Gut16]{Guth:2016wo}
Larry Guth, \emph{{Polynomial methods in combinatorics}}, University Lecture
  Series, vol.~64, American Mathematical Society, Providence, RI, 2016.

\bibitem[Hum11]{Humphreys:2011iy}
Macartan Humphreys, \emph{{Can Compactness Constrain the Gerrymander?}}, Irish
  Political Studies \textbf{26} (2011), no.~4, 513--520.

\bibitem[IUY00]{Ito:2000eb}
Hiro Ito, Hideyuki Uehara, and Mitsuo Yokoyama, \emph{{2-Dimension Ham Sandwich
  Theorem for Partitioning into Three Convex Pieces}}, Discrete and
  Computational Geometry, Springer Berlin Heidelberg, Berlin, Heidelberg, 2000,
  pp.~129--157.

\bibitem[KHA14]{Karasev:2014gi}
Roman~N. Karasev, A.~Hubard, and B.~Aronov, \emph{{Convex equipartitions: the
  spicy chicken theorem}}, Geometriae Dedicata \textbf{170} (2014), no.~1,
  263--279.

\bibitem[KKM29]{Knaster:1929vi}
Bronis{\l}aw Knaster, Casimir Kuratowski, and Stefan Mazurkiewicz, \emph{{Ein
  Beweis des Fixpunktsatzes f{\"u}r n-dimensionale Simplexe}}, Fundamenta
  Mathematicae \textbf{14} (1929), no.~1, 132--137.

\bibitem[Sak02]{Sakai:2002vs}
Toshinori Sakai, \emph{{Balanced Convex Partitions of Measures in R2}}, Graphs
  and Combinatorics \textbf{18} (2002), no.~1, 169--192.

\bibitem[Sch19]{Schnider:2019tg}
Patrick Schnider, \emph{Ham-sandwich cuts and center transversals in
  subspaces}, 35th International Symposium on Computational Geometry (SoCG
  2019), vol. 129, Schloss Dagstuhl--Leibniz-Zentrum fuer Informatik, 2019,
  p.~56.

\bibitem[Sob12]{Soberon:2012kp}
Pablo Sober{\'o}n, \emph{{Balanced Convex Partitions of Measures in $R^d$ }},
  Mathematika \textbf{58} (2012), no.~01, 71--76.

\bibitem[Sob17]{Soberon:2017kt}
\bysame, \emph{{Gerrymandering, Sandwiches, and Topology}}, Notices of the
  American Mathematical Society \textbf{64} (2017), no.~09, 1010--1013.

\bibitem[{\v Z}iv17]{Zivaljevic:2004vi}
Rade~T. {\v Z}ivaljevi{\'c}, \emph{Handbook of discrete and computational
  geometry}, 3rd ed., ch.~Topological Methods in Discrete Geometry,
  pp.~551--580, CRC Press, 2017.

\end{thebibliography}
\end{document}